\documentclass[11pt, reqno, english]{amsart}

\usepackage{enumerate,amsthm,bbm}
\usepackage{amssymb,amscd, amsmath, babel, amstext, amsthm, latexsym}

\setlength{\textheight}{21.5cm}
\setlength{\textwidth}{14cm}

\newcommand*{\abs}[1]{\left|#1\right|}
\newcommand*{\set}[1]{\left\{#1\right\}}
\newcommand*{\norm}[1]{\left\Vert#1\right\Vert}

\renewcommand{\P}{\mathbf P}
\newcommand{\E}{\mathbf E}
\newcommand{\R}{\mathbb R}
\newcommand{\D}{\mathcal D}
\newcommand{\F}{\mathcal F}
\newcommand{\ind}{\mathbbm 1}
\newcommand{\B}{\mathcal B}
\newcommand{\plim}{\P\mkern2mu\text{-}\!\lim}
\DeclareMathOperator{\diam}{diam}

\newtheorem{theorem}{Theorem}[section]
\newtheorem{lemma}{Lemma}[section]
\theoremstyle{proposition}
\newtheorem{proposition}{Proposition}[section]
\newtheorem{corollary}{Corollary}[section]
\newtheorem{example}{Example}[section]
\theoremstyle{remark}
\newtheorem{remark}{Remark}[section]
\theoremstyle{definition}
\newtheorem{definition}{Definition}[section]

\allowdisplaybreaks[1]

\begin{document}
\title[Integration for Volterra processes]
{Fractional calculus and path-wise integration for Volterra processes driven by L\'evy and martingale noise}

\author{G. Di Nunno}\address{G. Di Nunno \\Department of Mathematics, University of Oslo, P.O. Box 1053 Blindern, N-0316 Oslo Norway. Email: giulian@math.uio.no }\address{G. Di Nunno\\ Norwegian School of Economics and Business Administration (NHH), Helleveien 30, N-5045 Bergen, Norway.}
\author{Y. Mishura}\address{Y.Mishura \\ Department of Probability Theory, Statistics and Actuarial Mathematics, Taras Shevchenko National University of Kyiv, 64 Volodymyrska, 01601 Kyiv, Ukraine. E-mail: myus@univ.kiev.ua } \author{K. Ralchenko}\address{K. Ralchenko \\ Department of Probability Theory, Statistics and Actuarial Mathematics, Taras Shevchenko National University of Kyiv, 64 Volodymyrska, 01601 Kyiv, Ukraine. Email: k.ralchenko@gmail.com} 


\date{August 30, 2016}
\maketitle

\vspace{-8mm}
\begin{abstract}
We introduce a pathwise integration for Volterra processes driven by L\'evy noise or martingale noise. These processes are widely used in applications to turbulence, signal processes, biology, and in environmental finance. Indeed they constitute a very flexible class of models, which include fractional Brownian and L\'evy motions and it is part of the so-called ambit fields.
A pathwise integration with respect of such Volterra processes aims at producing a framework where modelling is easily understandable from an information perspective.
 The techniques used are based on fractional calculus and in this there is a bridging of the stochastic and deterministic techniques. The present paper aims at setting the basis for a framework in which further computational rules can be devised. Our results are general in the choice of driving noise. Additionally we propose some further details in the relevant context subordinated Wiener processes.

{\it Key Words and Phrases}:   fractional calculus, pathwise integration, Volterra processes, L\'evy processes, ambit fields, time change, subordination, fractional Brownian motion.

\end{abstract}

\section{Introduction}

\setcounter{section}{1}
\setcounter{equation}{0}\setcounter{theorem}{0}

In this paper, we consider Volterra processes, namely processes of the form
\begin{equation}\label{eq:volt-proc}
Y_t=\int_0^t g(t,s)\,dZ_s,
\quad t\in[0,T],
\end{equation}
where $g(t,s)$ is a given deterministic Volterra-type kernel, and
$Z$ is a  L\'evy process or a (square integrable) martingale process.
The integral in \eqref{eq:volt-proc} is understood in the sense of \cite{RR} as taking the limit in probability of elementary integrals.
Volterra processes of the type above are widely used in physics for the modelling of turbulence, see e.g. \cite{BN_Schmiegel}, \cite{Emil}. Also they have been suggested in the context of biology/medicine for the modeling of cancer growth in biological tissues, see \cite{cancer}.
Furthermore, these processes have been used successfully in mathematical finance, specifically in energy finance where the spot prices of electricity and other commodities strongly depend on environmental risk factor, such as temperatures, wind speed, sun coverage, precipitations, etc.  Such processes also appear in problems of credit risk and are well suited to fit stochastic volatility models. See e.g. \cite{BNBV13, Benthbook1, Bishwal,Fink_13,Kluppelberg_15,Tikanmaki}, and reference therein.
Finally, Volterra processes \eqref{eq:volt-proc} also provide suitable models in signal processing, see e.g. \cite{Unser_14}, and
for the workload of network devices, see e.g. \cite{Wolpert_05}.

The Volterra processes \eqref{eq:volt-proc} are part of the general class of ambit fields, which appear within a space-time framework, while here we have only time, thus a process and the the integrand sees not only a deterministic kernel, but also a stochastic component. Such stochastic component can be also replaced by a time change in the driving noise, as it actually done in the present paper in terms of subordination. In the setting of ambit processes the so-called ambit set is here reduced to the real semi-line. See e.g. \cite{Podolskij} for a survey on ambit fields.
The class of processes defined in \eqref{eq:volt-proc} contains the fractional Brownian motion and its generalisation, namely, the fractional L\'evy process.
In fact, assume that the function $g$ is the Molchan-Golosov kernel, which  is given by
\[
g_H(t,s)=C_H
(t-s)^{H-\frac12}F\left(\tfrac12-H,H-\tfrac12,H+\tfrac12,\tfrac{s-t}{s}\right),
\quad0<s<t<\infty,
\]
and $g_H(t,s)=0$ otherwise, where $H\in(0,1)$,
\[
C_H=\frac1{\Gamma(H+\frac12)}\left(\frac{2H\Gamma(H+\frac12)\Gamma(\frac32-H)}{\Gamma(2-2H)}\right)^{\frac12},
\]
and $F$ is the Gauss' hypergeometric function.
If the driving process $Z$ is a Brownian motion, then the process $Y$ defined by~\eqref{eq:volt-proc} with the kernel $g_H$ is the fractional Brownian motion, see \cite{NVV}.
If $Z$ is a L\'evy process without Gaussian component such that $\E Z_1=0$ and $\E Z_1^2<\infty$, then $Y$ is
the fractional L\'evy process by Molchan-Golosov transformation (fLpMG), introduced in \cite{Tikanmaki} (see also \cite{Fink_13} for multivariate generalization).
Let us mention that there exist another definitions of fractional L\'evy processes in the literature. In particular, fractional L\'evy process by Mandelbrot-van Ness representation (fLpMvN) was defined in \cite{Benassi04} and studied in \cite{Marquardt06}. The comparison between fLpMvN and fLpMG can be found in \cite{Tikanmaki}.

The aim of the present paper is to develop a theory of integration with respect to processes of the form \eqref{eq:volt-proc} applying fractional calculus, thus generalising the famous construction for integrals w.r.t. fractional Brownian motion from \cite{Zahle98, Zahle99}.
The use of fractional calculus allows for a bridging between stochastic and deterministic methods, which is very interesting from the use of models.
Indeed our aim is to set the basis for a framework of pathwise calculus for Volterra processes. At present we concentrate on the definition and characterisation of the integrators and the integrands.
Future research will focus on the actual calculus rules.
It is important to have a manageable calculus from the applied perspective in which beyond the prediction on a model, other questions naturally appear linked e.g. to stochastic control.
In this paper we concentrate on the case when the driving noise $Z$ is a L\'evy process and when it is possible we will consider $Z$ to be a square integrable martingale. Also, we detail our results in the case in which $Z$ is a subordinate Brownian motion. Indeed subordination is one of the easy way to construct a L\'evy process having also advantages from the simulation point of view. See e.g. \cite{CT}.
In particular, processes with compound Poisson, stable and Gamma subordinators are studied in detail.

Several approaches to the integration with respect to L\'evy-driven Volterra processes are known.
In \cite{Bender_Marquardt}, a Skorokhod type integral was considered. That construction followed $S$-transform approach, developed in~\cite{Bender03} for fractional Brownian motion.
Another approach was proposed in \cite{BNBPV14} and then extended in \cite{DV}, where the integration operator was based on Malliavin calculus and described an anticipative integral.
Wiener integration with respect to fLpMG was considered in \cite{Tikanmaki}. But it turns out that one of the simplest and natural methods to construct the integral w.r.t. L\'evy-driven Volterra processes is to apply fractional calculus. This has the advantage that combines deterministic and stochastic techniques and it has a clear relationship with the underlying noise information flow.
Hence, in the present paper, following \cite{Zahle98}, we construct pathwise stochastic integral using fractional integrals and derivatives.
We present general conditions for the existence of this integral in terms of fractional derivatives. As an example we consider the case of fLpMG.

The paper is organised as follows.
In Section~\ref{sec:int} we review the construction of the integral of a deterministic kernel with respect to a L\'evy noise and a square integrable martingale. In particular we detail to case of the subordinated Wiener process in Section~\ref{sec:SWP}. The elements of fractional calculus are presented in Section \ref{sec:integration}. Finally, in Section~\ref{integration- central} is devoted to pathwise integrals with respect to Volterra processes. Various examples are provided at all stages.

\section{Integration with respect to L\'evy processes}\label{sec:int}

\setcounter{section}{2}
\setcounter{equation}{0}\setcounter{theorem}{0}

In this section we study the stochastic integrals with respect to L\'evy processes and square integrable martingales. We review the basic construction and we provide some results on the upper-bounds for the moments of the resulting integrals. Indeed these a priori bounds for the moments of order $p\geq 1$ are fundamental results for the development in the sequel.
Being very simple and also being a partial case of Bichteler-Jacod inequalities, these bounds with the values of corresponding constants containing the integrals w.r.t.  the L\'{e}vy measures,  are rather elegant therefore we provide the corresponding proofs.
We start by the definition of the integral using the approach of Rajput and Rosinski~\cite{RR}.

\subsection{Integration of non-random functions with respect to L\'evy process }\label{sec:int-1}
Let $Z=\set{Z_t,t\ge0}$  be a L\'evy process. Define
\[
\tau(z):=
\begin{cases}
z, & \abs{z}\le1,\\
\frac{z}{\abs{z}}, & \abs{z}>1.
\end{cases}
\]
Then the characteristic function of $Z_t$ can be represented in the following form (see, e.g., \cite{sato})
\[
\E\exp\set{i\mu Z_t}
=\exp\set{t\Psi(\mu)},
\]
where \[
\Psi(\mu)=ib\mu-\frac{a\mu^2}{2}
+\int_{\R}\left(e^{i\mu x}-1-i\mu\tau(x)\right)\pi(dx),
\]
$b\in\R$,
$a\ge0$,
$\pi$ is a L\'evy measure on $\R$, that is a $\sigma$-finite Borel measure satisfying
$$\int_{\R}\left(x^2\wedge1\right)\pi(dx)<\infty,$$
with $\pi(\set{0})=0$ for any  $x\in\R$.
The triplet $(a,b,\pi)$ is shortly called the {\it characteristic triplet of $Z$}.

Now we review the construction of integral of non-random function w.\,r.\,t.\ the L\'evy process $Z$ proposed in~\cite{UW} and further developed in \cite{RR}.
Let the interval $[0,T]$ be fixed.
Consider, for any $n$, the partition of $[0,T]$ of the form
$[0,T]=\bigcup_{i=1}^nA_i$,
where $A_i\in\B([0,T])$ and are pair-wise disjoint and $\max_i \lambda(A_i) \rightarrow 0$, $n \to \infty$. Here $\lambda$ denotes the Lebesgue measure on $\mathcal{B}(\mathbb{R})$. Also throughout the paper $\mathcal{B}(\mathcal{S})$ stands for the Borel $\sigma$-field on the measurable space $\mathcal{S}$ equipped with the topology generated by the open sets.
Hereafter we construct a measure on $\mathcal{B}([0,T])$ taking values in the space
$L_0(\Omega,\F,\P)$  as follows.
We take a L\'evy process $Z$ and consider for any $A\in\B([0,T])$ a random variable $Z(A)$ with the characteristic function of the form
\[
\E\exp\set{i\mu Z(A)}
=\exp\set{\lambda(A)\Psi(\mu)}.
\]
Evidently, $Z$ is a measure on $\B([0,T])$ with the values in $L_0(\Omega,\F,\P)$ and
$Z([0,t])=Z_t$ is the value of the L\'evy process $Z$ at point $t$.
Introduce the following definition.

\begin{definition}[\cite{RR}]\label{def:1}
\hspace{5mm}
\begin{itemize}
\item[$(i)$] Let
$f(x)=\sum_{j=1}^nf_j\ind_{A_j}$
be a real-valued simple function on $[0,T]$, where $A_j\in\B([0,T])$ are pair-wise disjoint and
$\bigcup_{j=1}^nA_j=[0,T]$.
Then, for any $A\in\B([0,T])$, we set
\[
\int_Af\,dZ=\sum_{j=1}^nf_jZ(A\cap A_j).
\]
\item[$(ii)$]
A measurable function
$f\colon ([0,T],\B([0,T]))\to(\R,\B(\R))$
is said to be $Z$-integrable if there exists a sequence
$\set{f_n,n\ge1}$ of simple functions as in $(i)$ such that
\begin{enumerate}[1)]
\item $f_n\to f$ $\lambda$-a.\,e.
\item for any $A\in\B([0,T])$ the sequence
$\int_Af_n\,dZ$ converges in probability ($\plim$) as $n\to\infty$.
\end{enumerate}
If $f$ is $Z$-integrable, we put
\[
\int_Af\,dZ=\plim_{n\to\infty}\int_Af_n\,dZ.
\]
\end{itemize}
\end{definition}
The following statement summarises the basic facts about the newly introduced integral. They are established  in \cite{RR} and \cite{UW}. From now on we put $0\cdot\infty=0.$

\begin{proposition}\label{l:levy}\hspace{5mm}
\begin{itemize}
\item[$(i)$]
The integral
$\int_Af\,dZ$ is well  defined, i.e., for any $Z$-integrable function \\ $f\colon ([0,T],\B([0,T])) \rightarrow (\R,\B(\R))$, the integral does not depend on the choice of approximating sequence $\set{f_n,n\ge1}$.
\item[$(ii)$]
Define
\[
r(u):=au^2+\int_{\R}\left(\abs{xu}^2\wedge1\right)\,\pi(dx)
+\abs{bu+\int_{\R}\bigl(\tau(xu)-\tau(x)u\bigr)\,\pi(dx)}.
\]
Then a measurable function
$f\colon ([0,T],\B([0,T]))\to(\R,\B(\R))$ is
$Z$-integrable if and only if
$\int_{[0,T]}r(f(s))\,ds<\infty.$
\item[$(iii)$] If $f$ is $Z$-integrable, then the characteristic function of the integral can be rewritten as the characteristic function of a L\'{e}vy process:
\begin{multline}\label{eq:ch-f}
\E\exp\set{i\lambda\int_{[0,T]}f\,dZ}
=\exp\left\{ \int_{[0,T]}\Psi(\lambda f(s))\,ds\right\}\\
=\exp\set{ib_f\lambda-\frac{a_f\lambda^2}{2}
+\int_{\R}\left(e^{i\lambda x}-1-i\lambda\tau(x)\right)F_f(dx)},
\end{multline}
where
\begin{align*}
b_f&=\int_{[0,T]}\left(bf(s)+\int_{\R}\bigl(\tau(xf(s))-\tau(x)f(s)\bigr)\,\pi(dx)\right)ds,\\
a_f&=\int_{[0,T]}a f^2(s)\,ds,\\
F_f(B)&=\int_{[0,T]}\int_{\R}\ind_{f(s)x\in B\setminus\set{0}}\,\pi(dx)\,ds,
\quad B\in\B(\R).
\end{align*}
\end{itemize}
\end{proposition}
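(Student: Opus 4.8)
The plan is to handle the three claims together, anchored on the single observation that the integral of a simple function is an infinitely divisible random variable whose characteristic exponent can be computed explicitly; everything else is passage to the limit plus recognising that $r$ is exactly the gauge controlling this passage. First I would prove $(iii)$ for a simple function $f=\sum_j f_j\ind_{A_j}$. Here $\int_A f\,dZ=\sum_j f_j Z(A\cap A_j)$, and since the sets $A\cap A_j$ are pairwise disjoint the variables $Z(A\cap A_j)$ are independent, each with characteristic exponent $\lambda(A\cap A_j)\Psi(\cdot)$. Multiplying characteristic functions gives $\E\exp\{i\mu\int_A f\,dZ\}=\exp\{\int_A\Psi(\mu f(s))\,ds\}$, the first equality in \eqref{eq:ch-f}. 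The second equality is then pure algebra: inside $\Psi(\mu f(s))$ I replace the truncation term $i\mu f(s)\tau(x)$ by $i\mu\tau(f(s)x)$ plus the correction $i\mu(\tau(f(s)x)-f(s)\tau(x))$, absorbing the correction into the drift. After Fubini and recognising $F_f$ as the image of $\lambda\otimes\pi$ under $(s,x)\mapsto f(s)x$, the exponent takes the canonical L\'evy--Khintchine form with triplet $(a_f,b_f,F_f)$.

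Second, I would show that $\int_{[0,T]} r(f(s))\,ds<\infty$ is exactly the condition making the right-hand side of \eqref{eq:ch-f} a bona fide characteristic exponent, term by term: the summand $au^2$ controls finiteness of $a_f$; the summand $\int_\R(\abs{xu}^2\wedge1)\pi(dx)$ controls $\int(y^2\wedge1)F_f(dy)$, i.e.\ that $F_f$ is a L\'evy measure; and the absolute-value summand controls absolute convergence of $b_f$. This settles the deterministic half of $(ii)$: $\int r(f)\,d\lambda<\infty$ if and only if $(a_f,b_f,F_f)$ is a legitimate characteristic triplet.

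Third, I would link this to $Z$-integrability and to $(i)$. For the ``if'' direction, given $\int r(f)\,d\lambda<\infty$ I pick simple $f_n\to f$ a.e.\ with $r(f_n)$ dominated (by suitable truncation of $f$), so by dominated convergence the triplets $(a_{f_n},b_{f_n},F_{f_n})$ converge to $(a_f,b_f,F_f)$; by the continuity theorem for infinitely divisible laws $\int_A f_n\,dZ$ converges in distribution, and for these particular integrals convergence in distribution upgrades to convergence in probability, so $f$ is $Z$-integrable. Conversely, if $f$ is $Z$-integrable the limit is infinitely divisible with finite exponent, forcing $\int r(f)\,d\lambda<\infty$. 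Finally $(i)$ follows from linearity: for two approximating sequences $f_n,g_n\to f$ a.e., the difference $\int_A(f_n-g_n)\,dZ$ has exponent $\int\Psi(\mu(f_n-g_n))\,ds\to0$, hence tends to $0$ in distribution and therefore in probability, so the two limits agree.

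The main obstacle is the genuinely probabilistic input: the upgrade from convergence in distribution to convergence in probability for the integrals of simple functions, together with the fact that weak convergence of these infinitely divisible laws is equivalent to convergence of the triplets. Both are available from \cite{RR} and \cite{UW}, whereas the rest reduces to the algebraic rearrangement of $\Psi$ and routine dominated-convergence estimates for the three pieces of $r$.
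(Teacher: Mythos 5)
First, a point of comparison: the paper itself gives no proof of this proposition. It is stated as a summary of facts ``established in \cite{RR} and \cite{UW}'', so the only benchmark is the argument in those references. Your outline reproduces the architecture of that argument: compute the infinitely divisible law of a simple-function integral, recognise $r$ as the gauge controlling the triplet, and pass to the limit. The algebraic parts of your plan are correct: the rearrangement of $\Psi(\mu f(s))$ into canonical form with triplet $(a_f,b_f,F_f)$ (absorbing the truncation mismatch into the drift and pushing $\lambda\otimes\pi$ forward under $(s,x)\mapsto f(s)x$), and the term-by-term identification of $\int_{[0,T]}r(f(s))\,ds<\infty$ with well-definedness of the triplet.

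However, the two steps that carry the probabilistic weight are flawed as written. (a) In your argument for $(i)$ you assert that $f_n-g_n\to0$ $\lambda$-a.e.\ forces $\int_A\Psi\bigl(\mu(f_n(s)-g_n(s))\bigr)\,ds\to0$. Almost-everywhere convergence alone does not give this: for $h_n=n\ind_{[0,1/n]}$ one has $h_n\to0$ a.e.\ while $\int_{[0,T]}\Psi(\mu h_n(s))\,ds=\Psi(\mu n)/n$, which diverges already for Brownian $Z$. Your step never invokes the hypothesis that $\int_A f_n\,dZ$ and $\int_A g_n\,dZ$ converge in probability; the actual proof in \cite{RR} must use it (via an Egorov-type localization and the characterization of convergence in probability of integrals of simple functions), and that is where the real work of $(i)$ lies. (b) The principle you flag as the main obstacle --- ``for these particular integrals convergence in distribution upgrades to convergence in probability'' --- is false in that generality: with $Z$ a Brownian motion and $f_n$ alternating between $\ind_{[0,1/2]}$ and $\ind_{[1/2,1]}$, every $\int_{[0,T]} f_n\,dZ$ has law $N(0,1/2)$, so the sequence converges in distribution but not in probability. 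The upgrade is only valid towards a constant limit. Hence in the ``if'' direction of $(ii)$ you should apply dominated convergence to the differences $f_n-f_m$, conclude $\int_A(f_n-f_m)\,dZ\to0$ in distribution and therefore in probability, and then use the Cauchy criterion in $L_0$, rather than upgrading the convergence of the sequence itself. Both repairs are exactly what the cited results of \cite{RR} and \cite{UW} supply, so your roadmap is salvageable, but as written these two steps would fail.
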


The next lemma contains the modifications of well known properties of the introduced integral in the form that is suitable for our further considerations.

\begin{lemma}\label{l:levy2}
\hspace{5mm}
\begin{itemize}
\item[$(i)$]
Any function $f\in L_2([0,T])$ is $Z$-integrable.
In this case the characteristic function of the integral has the following form
\begin{multline}\label{eq:ch-f-2}
\E\exp\set{i\lambda\int_{[0,T]}f\,dZ}
=\exp\left\{ib\lambda\int_{[0,T]}f(s)\,ds
-\frac12a\lambda^2\int_{[0,T]}f^2(s)\,ds\right.\\
+\left.\int_{[0,T]}\left(\int_\R\left(e^{i\lambda f(s)x}-1-i\lambda f(s)\tau(x)\right)\pi(dx)\right)ds\right\}.
\end{multline}
\item[$(ii)$]
Let $p\in[1,2)$.
Suppose that $Z$  satisfies the additional assumptions:   $a=0$ and
$\int_{\abs{x}\le1}\abs{x}^p\,\pi(dx)<\infty$.
 Then
any function $f\in L_p([0,T])$ is $Z$-integrable, and
\begin{multline}\label{eq:ch-f-p}
\E\exp\set{i\lambda\int_{[0,T]}f\,dZ}
=\exp\left\{ib\lambda\int_{[0,T]}f(s)\,ds\right.\\
+\left.\int_{[0,T]}\left(\int_\R\left(e^{i\lambda f(s)x}-1-i\lambda f(s)\tau(x)\right)\pi(dx)\right)ds\right\}.
\end{multline}
\end{itemize}
\end{lemma}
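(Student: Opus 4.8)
The plan is to reduce both parts to the integrability criterion of Proposition~\ref{l:levy}$(ii)$, according to which a measurable $f$ is $Z$-integrable if and only if $\int_{[0,T]}r(f(s))\,ds<\infty$. Once integrability is secured, the two characteristic function formulas are essentially free: formula \eqref{eq:ch-f} already gives $\E\exp\{i\lambda\int_{[0,T]}f\,dZ\}=\exp\{\int_{[0,T]}\Psi(\lambda f(s))\,ds\}$, so \eqref{eq:ch-f-2} follows by inserting $\mu=\lambda f(s)$ into the exponent $\Psi$ and splitting the $ds$-integral into its three summands, and \eqref{eq:ch-f-p} is the same computation with the Gaussian term absent because $a=0$. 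Thus the whole task is to majorise $r(f(s))$ by an integrable function of $s$ under the stated hypothesis on $f$.

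For part $(i)$ I would bound the three pieces of $r(u)$ with $u=f(s)$. The Gaussian piece integrates to $a\int_{[0,T]}f^2(s)\,ds<\infty$ since $f\in L_2([0,T])$. For the second piece I split the inner integral at $\abs{x}=1$, using $\abs{xu}^2\wedge1\le x^2u^2$ on $\set{\abs{x}\le1}$ together with $\int_{\set{\abs{x}\le1}}x^2\,\pi(dx)<\infty$, and $\abs{xu}^2\wedge1\le1$ on $\set{\abs{x}>1}$ together with $\pi(\set{\abs{x}>1})<\infty$. The delicate piece is the compensator $\int_{\R}(\tau(xu)-\tau(x)u)\,\pi(dx)$; here the key is the elementary inequality $\abs{\tau(z)-z}\le z^2$, valid for every $z$ because $\tau(z)=z$ on $\set{\abs{z}\le1}$ while $\abs{z}-1\le z^2$ on $\set{\abs{z}>1}$. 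Applying it with $z=xu$ on $\set{\abs{x}\le1}$ (where $\tau(x)=x$) and the crude bound $\abs{\tau(xu)-u\tau(x)}\le1+\abs{u}$ on $\set{\abs{x}>1}$ (where $\abs{\tau(\cdot)}\le1$) produces a majorant of the form $C_1u^2+C_2(1+\abs{u})$ for the modulus of the compensator, with $C_1=\int_{\set{\abs{x}\le1}}x^2\,\pi(dx)$ and $C_2=\pi(\set{\abs{x}>1})$. Integrating these bounds over the finite interval $[0,T]$ and controlling $\int_{[0,T]}\abs{f}\,ds\le\sqrt{T}\,\norm{f}_{L_2([0,T])}$ by Cauchy--Schwarz yields $\int_{[0,T]}r(f(s))\,ds<\infty$, and the characteristic function is read off as above.

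Part $(ii)$ runs along the same lines, and the point I expect to be the crux is that the now unavailable second moment of $f$ must be replaced throughout by its $p$-th moment, which forces the small-jump estimates to be re-derived with exponent $p$ rather than $2$. Concretely, for $p\in[1,2)$ one has both $t^2\wedge1\le t^p$ for all $t\ge0$ and $\abs{\tau(z)-z}\le\abs{z}^p$ for all $z\in\R$: on $\set{\abs{z}\le1}$ the left-hand sides vanish or are dominated by $t^p$, whereas on $\set{\abs{z}>1}$ one uses $\abs{z}-1\le\abs{z}\le\abs{z}^p$ since $p\ge1$. These two inequalities, combined with the extra hypothesis $\int_{\set{\abs{x}\le1}}\abs{x}^p\,\pi(dx)<\infty$, bound the second piece of $r$ by $\abs{u}^p\int_{\set{\abs{x}\le1}}\abs{x}^p\,\pi(dx)+\pi(\set{\abs{x}>1})$ and the compensator by $\abs{u}^p\int_{\set{\abs{x}\le1}}\abs{x}^p\,\pi(dx)+(1+\abs{u})\pi(\set{\abs{x}>1})$; since $p\ge1$ on a finite interval gives $L_p([0,T])\subset L_1([0,T])$, integrating these majorants over $[0,T]$ is finite, and substituting into \eqref{eq:ch-f} with $a=0$ delivers \eqref{eq:ch-f-p}. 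Using the naive bound $\abs{\tau(z)-z}\le z^2$ here would instead demand $f\in L_2$, so isolating the sharp $\abs{z}^p$-estimate is exactly what makes the weaker hypothesis $f\in L_p$ suffice.
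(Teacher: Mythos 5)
Your proposal is correct and follows essentially the same route as the paper: both reduce to the Rajput--Rosi\'nski criterion of Proposition~\ref{l:levy}$(ii)$, verify it by splitting the L\'evy-measure integrals at $\abs{x}=1$ (exponent-$p$ bounds for the small jumps, the crude bound $1+\abs{u}$ for the large jumps), and then read the characteristic functions off Proposition~\ref{l:levy}$(iii)$. Your inequality $\abs{\tau(z)-z}\le\abs{z}^p$ merely packages into one estimate the paper's two subcases on $\set{\abs{x}\le1}$, namely $\abs{xf(s)}\le1$ (where the integrand vanishes) and $\abs{xf(s)}>1$ (where it is $\le2\abs{xf(s)}^p$).
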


\begin{proof}
According to paragraph $(ii)$ from Proposition~\ref{l:levy}, in order to establish $Z$-integrability, we need to prove that
\begin{gather*}
\int_{[0,T]}\int_{\R}a f^2(s)\,\pi(dx)\,ds<\infty,\\
\int_{[0,T]}\int_{\R}\left(\abs{xf(s)}^2\wedge1\right)\,\pi(dx)\,ds<\infty,
\intertext{and}
\int_{[0,T]}\int_{\R}\abs{\tau(xf(s))-\tau(x)f(s)}\,\pi(dx)\,ds<\infty.
\end{gather*}
The first integral is finite, since $f\in L_2([0,T])$ in the case $(i)$ and $a=0$ in the case $(ii)$.

Recall that $\int_\R\left(x^2\wedge1\right)\pi(dx)<\infty$ by the definition of the L\'evy measure.
Then
\[
\int_{\abs{x}\le1}\abs{x}^p\pi(dx)<\infty
\quad\text{and}\quad
\int_{\abs{x}>1}\pi(dx)<\infty
\]
in both cases $(i)$ and $(ii)$.
Since the rest of proof can be carried out similarly for both statements, from now we assume that $p\in[1,2]$.
Consider the second integral
\begin{align*}
\int_{[0,T]}\int_{\R}&\left(\abs{xf(s)}^2\wedge1\right)\,\pi(dx)\,ds
\le\int_{[0,T]}\int_{\R}\left(\abs{xf(s)}^p\wedge1\right)\,\pi(dx)\,ds\\
&\le\int_{[0,T]}\int_{\abs{x}\le1}\abs{xf(s)}^p\,\pi(dx)\,ds
+\int_{[0,T]}\int_{\abs{x}>1}\pi(dx)\,ds\\
&=\int_{[0,T]}\abs{f(s)}^p\,ds\int_{\abs{x}\le1}\abs{x}^p\,\pi(dx)
+T\int_{\abs{x}>1}\pi(dx)
<\infty.
\end{align*}
The third integral can be  rewritten as follows:
\begin{align*}
\int_{[0,T]}\int_{\R}&\abs{\tau(xf(s))-\tau(x)f(s)}\,\pi(dx)\,ds\\
&=\int_{[0,T]}\int_{\abs{x}\le1}\abs{\tau(xf(s))-\tau(x)f(s)}\ind_{\abs{xf(s)}\le1}\,\pi(dx)\,ds\\
&+\int_{[0,T]}\int_{\abs{x}\le1}\abs{\tau(xf(s))-\tau(x)f(s)}\ind_{\abs{xf(s)}>1}\,\pi(dx)\,ds\\
&+\int_{[0,T]}\int_{\abs{x}>1}\abs{\tau(xf(s))-\tau(x)f(s)}\,\pi(dx)\,ds
=:I_1+I_2+I_3.
\end{align*}
Note that $I_1=0$, because $\tau(xf(s))-\tau(x)f(s)=0$ for $\abs{x}\le1$ and $\abs{xf(s)}\le1$.
Let $\abs{x}\le1$ and $\abs{xf(s)}>1$. Then
\begin{align*}
\abs{\tau(xf(s))-\tau(x)f(s)}
&=\abs{\frac{xf(s)}{\abs{xf(s)}}-xf(s)}\\
&=\abs{xf(s)}\abs{\frac{1}{\abs{xf(s)}}-1}
\le2\abs{xf(s)}
\le2\abs{xf(s)}^p.
\end{align*}
Hence,
\[
I_2\le2\int_{[0,T]}\abs{f(s)}^p\,ds\int_{\abs{x}\le1}\abs{x}^p\,\pi(dx)<\infty.
\]
Finally, using the inequality $\abs{\tau(z)}\le1$, we can write
\[
\abs{\tau(xf(s))-\tau(x)f(s)}
\le\abs{\tau(xf(s))}+\abs{\tau(x)f(s)}
\le1+\abs{f(s)}.
\]
Then
\[
I_3\le\int_{[0,T]}(1+\abs{f(s)})\,ds\int_{\abs{x}>1}\pi(dx)<\infty.
\]
This concludes the proof of $Z$-integrability.
The formulas \eqref{eq:ch-f-2}--\eqref{eq:ch-f-p} for the characteristic functions follow directly from the statement $(iii)$ of Proposition~\ref{l:levy}.
\end{proof}

\begin{remark} With no  doubt, any function  $f\in L_p([0,T]), p>2$ is $Z$-integrable.
\end{remark}

The rest of this section is devoted to the upper bounds for the moments of the integral, which are fundamental tools for the analysis in the sequel.

First, assume that $f\in L_2([0,T])$ and $\int_{\R}x^2\pi(dx)<\infty$.
Then by differentiation of the characteristic function~\eqref{eq:ch-f-2}, one can deduce that the integral admits second moment. In fact we have
\begin{align*}
\E\abs{\int_{[0,T]}f\,dZ}^2
=&\left(\int_{[0,T]}f(s)\,ds\right)^2
\left(b+\int_{\R}(x-\tau(x))\,\pi(dx)\right)^2\\
&+\int_{[0,T]}f^2(s)\,ds
\left(a+\int_{\R}x^2\pi(dx)\right)<\infty.
\end{align*}
Here we use that
$\int_{\R}\abs{x-\tau(x)}\,\pi(dx)=\int_{\abs{x}>1}\abs{x-\tau(x)}\,\pi(dx)\\
\le2\int_{\abs{x}>1}\abs{x}\,\pi(dx)
\le2\int_{\abs{x}>1}x^2\,\pi(dx)<\infty$.

In the case when  $b=0$ and the measure $\pi$ is symmetric, the formula for the second moment is  simplified.
Indeed, in this case $\int_{\R}(x-\tau(x))\,\pi(dx)=0$
and
\[
\E\abs{\int_{[0,T]}f\,dZ}^2
=\int_{[0,T]}f^2(s)\,ds
\left(a+\int_{\R}x^2\pi(dx)\right).
\]

Now let us consider the general case  $p\ge1$.
The following theorem gives an a priori estimate for the $p$th moment of the integral. By Lemma~\ref{l:levy2}, in order to integrate functions from $L_p([0,T])$ with $p\in[1,2)$ we need to assume that $a=0$ for the process $Z$.

\begin{theorem}\label{l:apriori-p}
\hspace{5mm}
\begin{itemize}
\item[$(i)$]
Let $p\in[1,2)$.
Assume that $f\in L_p([0,T])$ and that the characteristic triplet of $Z$ satisfies
$a=b=0$, $\pi$ is symmetric,
$\int_\R\abs{x}^p\,\pi(dx)<\infty$.
Then
\begin{equation}\label{eq:apr1}
\E\abs{\int_{[0,T]}f\,dZ}^p
\le C\norm{f}_{L_p([0,T])}^p\int_\R \abs{x}^p\,\pi(dx).
\end{equation}
\item[$(ii)$]
Let $p\ge2$.
Assume that $f\in L_p([0,T])$ and that
$b=0$, $\pi$ is symmetric and\linebreak
$\int_\R\abs{x}^p\,\pi(dx)<\infty$.
Then
\begin{equation}\label{eq:apr2}
\E\abs{\int_{[0,T]}f\,dZ}^p
\le C\left(a^{p/2}\norm{f}_{L_2([0,T])}^p+\norm{f}_{L_p([0,T])}^p\int_\R \abs{x}^p\,\pi(dx)\right).
\end{equation}
\end{itemize}
\end{theorem}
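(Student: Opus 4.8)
The plan is to reduce the estimate to the case of simple integrands, where $\int_{[0,T]}f\,dZ$ becomes a finite sum of independent symmetric random variables, and then to feed this sum into the two classical moment inequalities for such sums: the von Bahr--Esseen inequality when $p\in[1,2]$ and Rosenthal's inequality when $p\ge2$. So first I would fix a simple function $g=\sum_{j=1}^m g_j\ind_{A_j}$ with the $A_j\in\B([0,T])$ pairwise disjoint, for which $\int_{[0,T]}g\,dZ=\sum_{j=1}^m g_j Z(A_j)$. Disjointness of the $A_j$ makes the $Z(A_j)$ independent, and the hypotheses $a=b=0$ (resp.\ $b=0$) with $\pi$ symmetric force the characteristic exponent of each $Z(A_j)$ to be real and even, so that every $Z(A_j)$ is symmetric and, since $p\ge1$, mean-zero.

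The engine of the proof is the atomic moment bound
\[
\E\abs{Z(A)}^p\le C\,\lambda(A)\int_\R\abs{x}^p\,\pi(dx),\qquad 1\le p\le 2,
\]
uniform in $A\in\B([0,T])$, together with the exact identity $\E Z(A)^2=\lambda(A)\bigl(a+\int_\R x^2\,\pi(dx)\bigr)$ already recorded above (and finite for $p\ge2$). To establish the atomic bound I would truncate the L\'evy measure to $\pi|_{\abs{x}>\eps}$, turning $Z(A)$ into a compound Poisson variable; conditioning on the Poisson count and applying von Bahr--Esseen to the conditionally i.i.d.\ symmetric jumps gives $\E\abs{Z(A)}^p\le 2\,\lambda(A)\int_{\abs{x}>\eps}\abs{x}^p\,\pi(dx)$, and letting $\eps\downarrow0$ with Fatou removes the truncation.

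For part $(i)$ I would apply von Bahr--Esseen to the independent symmetric summands $g_jZ(A_j)$ and then insert the atomic bound:
\begin{align*}
\E\abs{\int_{[0,T]}g\,dZ}^p
&\le 2\sum_{j=1}^m\abs{g_j}^p\,\E\abs{Z(A_j)}^p
\le C\sum_{j=1}^m\abs{g_j}^p\lambda(A_j)\int_\R\abs{x}^p\,\pi(dx)\\
&=C\norm{g}_{L_p([0,T])}^p\int_\R\abs{x}^p\,\pi(dx),
\end{align*}
the final equality being the exact formula $\norm{g}_{L_p([0,T])}^p=\sum_j\abs{g_j}^p\lambda(A_j)$ for simple functions. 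For part $(ii)$ I would instead invoke Rosenthal's inequality, which produces two terms: the variance term $\bigl(\sum_j g_j^2\E Z(A_j)^2\bigr)^{p/2}=\bigl(a+\int_\R x^2\,\pi(dx)\bigr)^{p/2}\norm{g}_{L_2([0,T])}^p$, whose split off Gaussian piece is the desired $a^{p/2}\norm{g}_{L_2([0,T])}^p$, and the $p$-th-moment term $\sum_j\abs{g_j}^p\E\abs{Z(A_j)}^p$, bounded as in part $(i)$ by $C\norm{g}_{L_p([0,T])}^p\int_\R\abs{x}^p\,\pi(dx)$.

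Finally I would drop the simplicity assumption: given $f\in L_p([0,T])$ choose simple $f_n\to f$ in $L_p$, apply the inequalities to $f_n-f_m$ to see that $\{\int f_n\,dZ\}$ is Cauchy in $L_p(\Omega)$, identify its limit with the $\plim$ defining $\int_{[0,T]}f\,dZ$, and pass to the limit using continuity of the norms. I expect the genuine obstacle to lie in part $(ii)$: Rosenthal's variance term also carries a purely jump contribution of size $\bigl(\int_\R x^2\,\pi(dx)\bigr)^{p/2}\norm{f}_{L_2([0,T])}^p$, which is not in general dominated by the two terms displayed in \eqref{eq:apr2}; making \eqref{eq:apr2} come out exactly as stated therefore requires either absorbing this contribution into the constant on the finite horizon $[0,T]$ (via $\norm{f}_{L_2([0,T])}\le T^{1/2-1/p}\norm{f}_{L_p([0,T])}$) or organizing the jump/Gaussian split so that only $a^{p/2}$ survives in front of $\norm{f}_{L_2([0,T])}^p$, and this is the step demanding the most care.
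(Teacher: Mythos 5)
Your route is genuinely different from the paper's. The paper never decomposes into simple functions or touches classical sum inequalities: it regards $f\mapsto\int_{[0,T]}f\,dZ$ as a linear operator from the Orlicz space $L_{\Phi_p}([0,T])$ (with Luxemburg norm, $\Phi_p(u)=\int_\R\abs{ux}^p\,\pi(dx)$, resp.\ $\Phi_p^{(a)}(u)=au^2+\Phi_p(u)$) into $L_p(\Omega;\P)$, proves well-definedness via Sato's Theorem~25.3 and Rajput--Rosinski's Theorem~3.3, proves continuity via their Lemma~3.2, invokes ``continuous linear implies bounded'', and finishes by computing the Luxemburg norm explicitly. That argument is soft and yields no information on the constant. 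Your part $(i)$ --- von Bahr--Esseen on the independent symmetric blocks $g_jZ(A_j)$ plus the compound-Poisson atomic bound and Fatou --- is correct and in fact sharper, since it produces an explicit universal constant where the paper's $C$ is an unidentified operator norm depending on $\pi$ and $T$. Moreover, the obstacle you flagged in $(ii)$ is real: with $a=0$ and $f\equiv1$ the left-hand side of \eqref{eq:apr2} grows like $T^{p/2}$ by the central limit theorem while the stated right-hand side is linear in $T$, so the inequality can only hold with $C$ depending on $T$ and $\pi$; the paper's proof hides exactly this dependence inside the operator norm, and your H\"older absorption $\norm{f}_{L_2([0,T])}\le T^{1/2-1/p}\norm{f}_{L_p([0,T])}$ is therefore a legitimate way to finish.

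However, part $(ii)$ contains a gap you did not flag: you bound Rosenthal's individual-moment term $\sum_j\abs{g_j}^p\E\abs{Z(A_j)}^p$ ``as in part $(i)$'' by $C\sum_j\abs{g_j}^p\lambda(A_j)\int_\R\abs{x}^p\,\pi(dx)$, but the atomic bound $\E\abs{Z(A)}^p\le C\lambda(A)\int_\R\abs{x}^p\,\pi(dx)$ is \emph{false} for $p>2$ (again by the CLT: for large $\lambda(A)$ the moment grows like $\bigl(\lambda(A)\int_\R x^2\,\pi(dx)\bigr)^{p/2}$, not like $\lambda(A)$), and its derivation collapses as well, since von Bahr--Esseen requires $p\le2$; conditioning on the Poisson count you must instead use Rosenthal, and $\E N^{p/2}\neq\E N$ then produces precisely the extra term. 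The correct atomic bound for $p\ge2$ is
\begin{equation*}
\E\abs{Z(A)}^p\le C_p\left(\bigl(\lambda(A)\bigl(a+\textstyle\int_\R x^2\,\pi(dx)\bigr)\bigr)^{p/2}+\lambda(A)\int_\R\abs{x}^p\,\pi(dx)\right),
\end{equation*}
and feeding it into Rosenthal leaves you with the additional contribution $\bigl(a+\int_\R x^2\,\pi(dx)\bigr)^{p/2}\sum_j\abs{g_j}^p\lambda(A_j)^{p/2}$, which is \emph{not} controlled by the right-hand side of \eqref{eq:apr2} for a fixed coarse partition. The repair is to note that this contribution is at most $\bigl(a+\int_\R x^2\,\pi(dx)\bigr)^{p/2}\bigl(\max_j\lambda(A_j)\bigr)^{p/2-1}\norm{g}_{L_p([0,T])}^p$, choose approximating simple functions whose partition mesh tends to zero (as the paper's Definition~\ref{def:1} permits), and pass to the limit by Fatou, after which only the variance term $\bigl(a+\int_\R x^2\,\pi(dx)\bigr)^{p/2}\norm{f}_{L_2([0,T])}^p$ and the term $\norm{f}_{L_p([0,T])}^p\int_\R\abs{x}^p\,\pi(dx)$ survive; combined with your absorption step this completes the proof.
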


\begin{proof}
$(i)$
Let
\[
\Phi_p(u)=\int_\R\abs{ux}^p\,\pi(dx).
\]
Evidently, $\Phi_p\colon\R\to\R_+$ is a Young function, i.\,e. it is a convex function such that $\Phi_p(u)=\Phi_p(-u)$, $\Phi_p(0)=0$ and
$\lim_{u\to\infty}\Phi_p(u)=\infty$.
Therefore, we can consider the Orlicz space
\[
L_{\Phi_p}([0,T])
=\set{f\in L_0([0,T]) : \int_{[0,T]}\Phi_p(\abs{f(s)})\,ds<\infty}=L_p([0,T])
\]
with the Luxemburg norm
\begin{equation}\label{eq:norm}
\begin{split}
\norm{f}_{\Phi_p}&=\inf\set{c>0 : \int_{[0,T]}\Phi_p\left(c^{-1}\abs{f(s)}\right)\,ds\le1}\\
&= {\norm{f}_{L_p([0,T])}}{\left(\int_\R\abs{x}^p\,\pi(dx)\right)^{\frac{1}{p}}}.
\end{split}
\end{equation}
Then $L_{\Phi_p}([0,T])$ obviously is a Banach space.

First, let us prove that $\int_{[0,T]}f\,dZ\in L_p(\Omega;\P)$ for any $f\in L_{\Phi_p}([0,T])$.
Assume that $f\in L_{\Phi_p}([0,T])$, that is
$\int_{[0,T]}\Phi_p(\abs{f(s)})\,ds<\infty$.
Recall that by $F_f(\cdot)$ the L\'evy measure in the canonical representation of the characteristic function of
$\int_{[0,T]}f\,dZ$ (see Proposition~1 (iii)).
Then, by Proposition~1,
\begin{align*}
\int_{\abs{u}>1}\abs{u}^p\,F_f(du)
&=\int_{[0,T]}\int_{\set{\abs{f(s)x}>1}}\abs{f(s)x}^p\,\pi(dx)\,ds\\
&\le\int_{[0,T]}\Phi_p(\abs{f(s)})\,ds<\infty.
\end{align*}
Taking into account paragraph $(iii)$ of Proposition \ref{l:levy} and the well-known property of L\'evy processes (see \cite[Theorem~25.3]{sato}), we can conclude, as it was done in the proof of inequality (3.6), Theorem 3.3 from  \cite{RR}, that the finiteness of this integral implies the finiteness of
$\E\abs{\int_{[0,T]}f\,dZ}^p$.

Further, let us prove that the linear mapping
\[
L_{\Phi_p}([0,T])\ni f\longmapsto
\int_{[0,T]}f\,dZ\in L_p(\Omega;\P)
\]
is continuous.
Let $f_n\to0$ in $L_{\Phi_p}([0,T])$.
This implies that
\begin{equation}\label{eq:conv-1}
\int_{[0,T]}\Phi_p(\abs{f_n(s)})\,ds\to0,
\quad\text{as }n\to\infty,
\end{equation}
see \cite[Proposition~3.2.4]{RaoRen91}.
Let $b_n$, $a_n$ and $F_n$ be, respectively, the centring constant, the variance, and the L\'evy measure in the canonical representation of the characteristic function of
$\int_{[0,T]}f_n\,dZ$ (see Proposition 1 (iii)).
Under the assumptions taken, being $\pi$ symmetric, we have $a_n=b_n=0$, and
\begin{align*}
\int_{\abs{u}>1}\abs{u}^p\,F_n(du)
&=\int_{[0,T]}\int_{\abs{f_n(s)x}>1}\abs{f_n(s)x}^p\,\pi(dx)\,ds\\
&\le\int_{[0,T]}\int_{\R}\abs{f_n(s)x}^p\,\pi(dx)\,ds\rightarrow 0,
\end{align*}
as $n\to\infty$, by~\eqref{eq:conv-1}.
Then the convergence
\[
\E\abs{\int_{[0,T]}f_n\,dZ}^p\rightarrow0,
\quad\text{as }n\to\infty,
\]
follows from~\cite[Lemma~3.2]{RR}.
Thus, the continuity is proved.

Since any continuous linear operator is bounded \cite[\S~29, Theorem 1]{KolmFomin1}, we have
\[
\left(\E\abs{\int_{[0,T]}f\,dZ}^p\right)^{\frac1p}\le C\norm{f}_{\Phi_p},
\]
where the constant $C$ does not depend on $f$. Taking \eqref{eq:norm} into account,
we conclude the proof.

$(ii)$ The statement can be proved similarly, using the function
\[
\Phi_p^{(a)}(u)=au^2+\int_\R\abs{ux}^p\,\pi(dx)
\]
instead of $\Phi_p$.
Arguing as above, we get
\[
\E\abs{\int_{[0,T]}f\,dZ}^p\le C\norm{f}_{\Phi_p^{(a)}}^p.
\]
Then it is not hard to see that
\begin{align*}
\norm{f}_{\Phi_p^{(a)}}^p
&=\left(\inf\set{c>0:\frac{a\norm{f}_{L_2([0,T])}^2}{c^2}
+\frac{\norm{f}_{L_p([0,T])}^p\int_\R \abs{x}^p\,\pi(dx)}{c^p}\le1}\right)^p\\
&\le C_1\left(a^{1/2}\norm{f}_{L_2([0,T])}+\norm{f}_{L_p([0,T])}\left(\int_\R \abs{x}^p\,\pi(dx)\right)^{1/p}\right)^p\\
&\le C_2\left(a^{p/2}\norm{f}_{L_2([0,T])}^p+\norm{f}_{L_p([0,T])}^p\int_\R \abs{x}^p\,\pi(dx)\right).
\end{align*}
\end{proof}

\begin{remark}
The case $b\ne0$ can be considered similarly.
If the other assumptions of the above theorem hold, then
\[
\E\abs{\int_{[0,T]}f\,dZ}^p
\le C\left(\abs{b}^p\norm{f}_{L_1([0,T])}^p+\norm{f}_{L_p([0,T])}^p\int_\R \abs{x}^p\,\pi(dx)\right).
\]
for $p\in[1,2)$, and
\begin{multline}\label{ineq-powers}
\E\abs{\int_{[0,T]}f\,dZ}^p
\le C\biggl(\abs{b}^p\norm{f}_{L_1([0,T])}^p+a^{p/2}\norm{f}_{L_2([0,T])}^p\\
+\norm{f}_{L_p([0,T])}^p\int_\R \abs{x}^p\,\pi(dx)\biggr).
\end{multline}
for $p\ge2$.
In this case the functions $\Phi_p$ and $\Phi_p^{(a)}$ in the proof are replaced with
$\Phi_p^{(0,b)}(u)=\abs{bu}+\Phi_p(u)$ and
$\Phi_p^{(a,b)}(u)=\abs{bu}+\Phi_p^{(a)}(u)$, respectively.
\end{remark}

\begin{remark} The upper bound \eqref{ineq-powers} can be simplified to
\begin{equation}\label{ineq-p}
\E\abs{\int_{[0,T]}f\,dZ}^p
\le C \norm{f}_{L_p([0,T])}^p.
\end{equation}
Inequality \eqref{ineq-p} for predictable stochastic integrands is contained in Theorem 66 \cite{Prott}. For such integrands the inequality \eqref{ineq-p} is a partial case of Bichteler--Jacod inequality, see, e.g., \cite{MarRock}.  However, we prefer to give here the detailed structure  of the right-hand side.
\end{remark}

\begin{remark}\label{rem-mart}
{\bf Volterra processes driven by square integrable martingales}.
Let $p\geq 2$ and $\int_\R \abs{x}^p\,\pi(dx)<\infty$. Then, taking into account inequality $\int_{\R}\left(x^2\wedge1\right)\pi(dx)<\infty,$  we get that $\int_\R x^2\,\pi(dx)<\infty$, so that $Z$ is a square-integrable process.
Assuming additionally that $b=0$, and the measure $\pi$ is symmetric, we can see that $Z$ is a square-integrable martingale with quadratic characteristics $\langle Z\rangle_t=(a+\int_\R x^2\,\pi(dx))t.$  However,   we can consider the general case.
Indeed, let $M$ be a square integrable c\`{a}dl\`{a}g martingale with quadratic variation $[M]$ and zero mean. Then, using $M$ as integrator, the construction of the integral $\int_0^T fdM$ coincides with the one of $\int_0^T fdZ$, constructed above.
Furthermore, according to Burkholder-Davis-Gundy inequalities, for any $p\geq 1 $ there exists a constant $C=C_p$ such that
$$
\E\left|\int_0^T fdM\right|^p\leq C_p\E\left(\int_0^T f^2d[M]\right)^\frac{p}{2}.
$$
In the simplest case, when $p=2$, we obtain that $$\E\left|\int_0^T fdM\right|^2\leq C_p\E\left(\int_0^T f^2d[M]\right)=C_p\E\left(\int_0^T f^2d\langle M\rangle\right).
$$
If $\langle M\rangle_t=\int_0^t m_sds $ with $\E m_s\leq C$, we obtain the same bound as \eqref{ineq-p}, but for a wider class of processes. So, in this case we can use the martingale approach instead of the L\'{e}vy-processes approach.

 Comparing our a priori estimates with estimates for other classes of   integrators, we can consider  the process having the form of the sum
 $$
 Y_t=\int g_1dM+\int g_2d\overline{\mu}=\int_0^t g_1dM+\int_{(0,t] \times\R}g_2(s,z)\overline{\mu}(ds,dz),
 $$
 where $M$ is a square-integrable continuous martingale with quadratic  characteristics $\langle M\rangle$, $\overline{\mu}=\mu-\nu$, $\mu$ is a square-integrable random measure with dual predictable projection $\nu$, integrands $g_i, i=i,2$ are predictable and such that all integrals are well-defined and square-integrable. Then, according to Burkholder-Davis-Gundy inequalities and the generalisation of Bichteler--Jacod inequalities from \cite{MarRock},  the following estimate holds: for any $T>0$, $\alpha\in[1,2]$ and any $p\geq 1$ there exists a constant $C=C_{\alpha,p,T}$ such that
\begin{equation*}\E(\sup_{t\in[0,T]}|Y_t|^p)\leq C_{\alpha,p,T}\left(\E\left(\int_{[0,T]}g_1^2d\langle M\rangle\right)^{\frac{p}{2}}+\E\left(\int_{[0,T]\times \R}
|g_2|^\alpha d\nu\right)^{\frac{p}{\alpha}}\right),\end{equation*}
 for $p\in[1, \alpha]$,and
\begin{equation*}\begin{gathered}\E(\sup_{t\in[0,T]}|Y_t|^p)\leq C_{\alpha,p,T}\Bigg(\E\left(\int_{[0,T]}g_1^2d\langle M\rangle\right)^{\frac{p}{2}}+\E\left(\int_{[0,T]\times \R}|g_2|^\alpha d\nu\right)^{\frac{p}{\alpha}}\\+\E\left(\int_{[0,T]\times \R}|g_2|^p d\nu\right)\Bigg) \end{gathered}\end{equation*}
for $p\in(\alpha, \infty).$
However, we shall not consider such processes in the framework of the present paper.
\end{remark}

   \vspace*{-12pt} 
\subsection{Integration of Volterra-type kernels with respect to a L\'{e}vy  process}\label{sec:int-2}
Now, let us have a two-parameter measurable non-random kernel of the form $g=g(t,s)\colon\R_+^2\to\R$,
and our goal is to construct the integral
$J(t,g)=\int_0^tg(t,s)\,dZ_s$, for any $t\in[0,T]$.
This construction is the same as for constructed   in Subsection \ref{sec:int-1} integral of non-random functions $f$, therefore we can use Lemma \ref{l:levy2} and Theorem \ref{l:apriori-p} and immediately proceed with the following conclusion.

\begin{theorem}
\label{th:2}
Let one of the following conditions hold:
\begin{itemize}
\item[$(A)$] for some $p\in[1,2)$,  $g=g(t,\cdot)\in L_p([0,t])$ for any $t\in[0,T]$,
$a=b=0$, and the measure $\pi$ is symmetric with
$\int_\R\abs{x}^p\,\pi(dx)<\infty$;
\item[$(B)$] for some $p\ge2$, $g=g(t,\cdot)\in L_p([0,t])$ for any $t\in[0,T]$, $b=0$,  and the measure $\pi$ is symmetric with  $\int_{\R}\abs{x}^p\pi(dx)<\infty$.
\end{itemize}
Then, for any $t\in[0,T]$, $g(t,\cdot)$ is $Z$-integrable,   in the case when condition $(A)$ holds, we have the a priori estimate
\begin{equation}\label{estimator3}
\E\abs{\int_0^tg(t,s)\,dZ_s}^p
\le C\norm{g(t,\cdot)}_{L_p([0,t])}^p \int_{\R}|x|^p\pi(dx),
\end{equation}
and   in the case when condition $(B)$ holds, we have the a priori estimate
 \begin{multline}\label{estimator2}
\E\abs{\int_0^tg(t,s)\,dZ_s}^p\\
\le C\left(a^{p/2}\norm{g(t,\cdot)}_{L_2([0,t])}^p+\norm{g(t,\cdot)}_{L_p([0,t])}^p\int_{\R}\abs{x}^p\pi(dx)\right).
\end{multline}
\end{theorem}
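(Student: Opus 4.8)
The plan is to recognise this statement as a direct corollary of the two results already established for non-random functions, obtained simply by freezing the first variable of the kernel. The only genuine point to address is that Lemma \ref{l:levy2} and Theorem \ref{l:apriori-p} were formulated for integration over the fixed interval $[0,T]$, whereas here the integral runs over $[0,t]$. This is harmless: the entire construction of Subsection \ref{sec:int-1} is valid verbatim on any fixed bounded interval, so one may replace $[0,T]$ by $[0,t]$ throughout. Equivalently, one may extend $g(t,\cdot)$ by zero to $[0,T]$ and integrate over $[0,T]$, since the zero extension alters neither membership in $L_p$ nor the value of the integral.

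First I would fix an arbitrary $t\in[0,T]$ and set $f:=g(t,\cdot)$, viewed as a deterministic function on $[0,t]$; in both cases the hypothesis gives $f\in L_p([0,t])$. To obtain $Z$-integrability under condition $(A)$, note that $p\in[1,2)$, $a=0$, and $\int_{\abs{x}\le1}\abs{x}^p\,\pi(dx)\le\int_\R\abs{x}^p\,\pi(dx)<\infty$, so that Lemma \ref{l:levy2}$(ii)$ applies directly and shows $f$ is $Z$-integrable. Under condition $(B)$ we have $p\ge2$, and since $[0,t]$ is bounded we get $L_p([0,t])\subseteq L_2([0,t])$, hence $f\in L_2([0,t])$; Lemma \ref{l:levy2}$(i)$ then yields $Z$-integrability, with no restriction on $a$ required. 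This settles the integrability claim in both cases.

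For the moment estimates, the key observation is that the assumptions have been arranged precisely to match the two parts of Theorem \ref{l:apriori-p}. The hypotheses of condition $(A)$ coincide with those of Theorem \ref{l:apriori-p}$(i)$ (with $[0,T]$ replaced by $[0,t]$ and $f=g(t,\cdot)$), so \eqref{eq:apr1} delivers \eqref{estimator3} at once; likewise the hypotheses of condition $(B)$ coincide with those of Theorem \ref{l:apriori-p}$(ii)$, and \eqref{eq:apr2} yields \eqref{estimator2}. The main, and essentially only, obstacle is the bookkeeping of matching each condition to the correct part of the earlier results and verifying that the passage from $[0,T]$ to the subinterval $[0,t]$ is legitimate; no new inequality has to be proved.
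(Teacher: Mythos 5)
Your proposal is correct and follows exactly the paper's own route: the paper justifies Theorem \ref{th:2} with a single sentence noting that the construction coincides with that of Subsection \ref{sec:int-1}, so Lemma \ref{l:levy2} and Theorem \ref{l:apriori-p} apply immediately to $f=g(t,\cdot)$ on $[0,t]$. Your treatment is in fact slightly more careful than the paper's, since you explicitly handle the passage from $[0,T]$ to $[0,t]$ and the embedding $L_p([0,t])\subseteq L_2([0,t])$ for $p\ge2$, but the substance is identical.
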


\begin{remark}\label{rem kern}
\hspace{5mm}
\begin{itemize}
\item[$(i)$] It is sufficient for our purposes to consider  the restriction of $g$  to the set $\{0\le s<t\le T\}$. We  can simply assume that $g:\{0\le s<t\le T\}\rightarrow \R.$
 \item[$(ii)$] The extension to square-integrable martingale considered in Remark \ref{rem-mart} is valid in the case of the kernel $g$ with evident corrections. \end{itemize}
\end{remark}

\section{An example of L\'{e}vy process as integrator: the subordinated  Wiener process}\label{sec:SWP}

\setcounter{section}{3}
\setcounter{equation}{0}\setcounter{theorem}{0}

Time change and here, in particular, subordination is a feasible way to build L\'evy processes from known ones. This constitute one of the simplest ways to simulation and thus it gains particular interest.
In this section we shall concentrate on this case. Se e.g. \cite{CT}.

\subsection{Description of subordinate Wiener process}
Let $W=\left\{W_t, t\ge 0\right\}$ be a one-dimensional Wiener process. Subordination of the Wiener process consists in time-changing the paths of $W$ by  an independent subordinator $L=\left\{L_t, t\ge0\right\}$, which is a non-negative, non-decreasing L\'{e}vy process starting from $0$.
The Laplace exponent $\Phi=\Phi(\lambda)$ of $L$, defined by the relation
$$\E\exp\left\{-\lambda L_t\right\}=\exp\left\{-t\Phi(\lambda)\right\}, \,\,\, \lambda>0,$$
has the form
$$\Phi(\lambda)=a\lambda+\int\limits_0^{\infty}(1-e^{-\lambda x})\nu(dx),$$
where $a>0$ is the drift of the subordinator and $\nu$ is its L\'{e}vy measure with $\int\limits_0^\infty(1\wedge x)\nu(dx)<\infty$.

\vspace{2mm}
Consider the  function $(2\pi s)^{-\frac{1}{2}}\exp\left\{-\frac{x^2}{2s}\right\}$ which    is bounded in $s$ on $\R^+=(0,+\infty)$ for any fixed $x\in\R $.   Introduce the following density function
$$
\varrho(x)=\int\limits_0^\infty(2\pi s)^{-\frac{1}{2}}\exp\left\{-\frac{x^2}{2s}\right\}\nu(ds), \quad x\in\R,
$$
and let $\pi$ be a measure on $\mathcal{B}(\mathbb{R} )$ with density $\varrho$.
For later use we introduce the following condition:
\begin{itemize}
\item [(C)] $\int\limits_0^{\infty}x^{\frac{1}{2}}\nu(dx)<\infty.$
\end{itemize}

\begin{lemma} \label{theor1:1} The following statements are true.
\begin{itemize}
\item[$(i)$] The subordinate Wiener process
$$W^L:=W(L)=\left\{W^L_t:=W(L_t),\,\,\, t\ge 0\right\}$$
characteristic function
\begin{gather}
\notag
\Upsilon(\mu):=\E\exp\left\{i\mu W^L_t\right\}=\E\exp\left\{-\frac{\mu^2}{2}L_t\right\}=\exp\left\{-t\Phi\left(\frac{\mu^2}{2}\right)\right\}\\
\label{ch_f}
=\exp\left\{-t\left(\frac{a\mu^2}{2}+\int\limits_0^\infty\left(1-e^{-\frac{x\mu^2}{2}}\right)\nu(dx)\right)\right\}.
\end{gather}
\item[$(ii)$] The subordinated Wiener process $W^L$ is a L\'evy process with zero drift coefficient, its   diffusion coefficient equals $a$,  and  L\'{e}vy measure equal $\pi$. Its characteristic function can be represented as
\begin{gather}\label{equ1}
 \E\exp\left\{i\mu W^L_t\right\}=
\exp\left\{t\left(-\frac{a\mu^2}{2}+ \int_\R\left( e^{i\mu x}-1-i\mu x1_{|x|<1}\right)\pi(dx)\right)\right\}.
\end{gather}
\item[$(iii)$]  Let condition $(C)$ hold. Then $\int\limits_{\R}  |x| \pi(dx)<\infty $, and therefore, $$\E|W^L_t|=t\E|W^L_1|<\infty$$ for any $t\geq 0$.
\end{itemize}
\end{lemma}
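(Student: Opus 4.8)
The plan is to handle the three parts in turn, in each case reducing matters to the subordinator $L$ together with the Gaussian kernel $p_s(x):=(2\pi s)^{-1/2}\exp\{-x^2/(2s)\}$. For $(i)$ I would condition on $L$ and use independence: since $W$ is a standard Wiener process independent of $L$, on $\set{L_t=s}$ the variable $W^L_t=W(s)$ is centred Gaussian with variance $s$, so that $\E[\exp\{i\mu W^L_t\}\mid L_t=s]=\exp\{-\mu^2 s/2\}$. Averaging over $L_t$ gives $\Upsilon(\mu)=\E\exp\{-\tfrac{\mu^2}{2}L_t\}$, and the definition of the Laplace exponent with $\lambda=\mu^2/2$ rewrites this as $\exp\{-t\Phi(\mu^2/2)\}$; inserting the explicit form of $\Phi$ produces the last line of \eqref{ch_f}. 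This part is routine.

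For $(ii)$ I would obtain the L\'evy property of $W^L$ (c\`adl\`ag paths with stationary independent increments) from the classical subordination theorem, a L\'evy process time-changed by an independent subordinator being again a L\'evy process. The substantive step is to recast the exponent of $(i)$ in L\'evy--Khintchine form. From $e^{-\mu^2 s/2}=\int_\R e^{i\mu x}p_s(x)\,dx$, the normalisation $\int_\R p_s(x)\,dx=1$, and the oddness of $x\mapsto x\ind_{\abs{x}<1}p_s(x)$, I would write
\[
e^{-\mu^2 s/2}-1=\int_\R\bigl(e^{i\mu x}-1-i\mu x\,\ind_{\abs{x}<1}\bigr)p_s(x)\,dx,
\]
integrate in $\nu(ds)$, and exchange the order of integration to recognise $\int_0^\infty p_s(x)\,\nu(ds)=\varrho(x)$, the density of $\pi$. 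This gives exactly the exponent in \eqref{equ1}, with vanishing linear term (hence zero drift for this truncation), diffusion coefficient $a$ and L\'evy measure $\pi$.

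The interchange of integrals is the one delicate point, and I expect it to be the main obstacle. I would justify it through the elementary bound $\abs{e^{i\mu x}-1-i\mu x\,\ind_{\abs{x}<1}}\le C_\mu(x^2\wedge1)$ combined with $\int_\R(x^2\wedge1)p_s(x)\,dx\le s\wedge1$, so that the double integral is dominated by $C_\mu\int_0^\infty(s\wedge1)\,\nu(ds)$, finite by the defining property $\int_0^\infty(1\wedge x)\,\nu(dx)<\infty$ of the subordinator. Applying the same estimate to $x^2\wedge1$ itself shows $\int_\R(x^2\wedge1)\,\pi(dx)\le\int_0^\infty(s\wedge1)\,\nu(ds)<\infty$, confirming that $\pi$ is a genuine L\'evy measure.

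For $(iii)$ I would compute the first absolute moment of $\pi$ directly. By Tonelli's theorem and the definition of $\varrho$,
\[
\int_\R\abs{x}\,\pi(dx)=\int_0^\infty\Bigl(\int_\R\abs{x}\,p_s(x)\,dx\Bigr)\nu(ds)=\sqrt{\tfrac{2}{\pi}}\int_0^\infty s^{1/2}\,\nu(ds),
\]
since an $N(0,s)$ variable has mean absolute value $\sqrt{2s/\pi}$. Condition $(C)$ renders the right-hand side finite, so $\int_\R\abs{x}\,\pi(dx)<\infty$ and in particular $\int_{\abs{x}>1}\abs{x}\,\pi(dx)<\infty$; by the standard moment criterion for L\'evy processes this forces $\E\abs{W^L_t}<\infty$ for every $t\ge0$. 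The same conditioning as in $(i)$ exhibits the dependence on $t$ explicitly, namely $\E\abs{W^L_t}=\sqrt{2/\pi}\,\E[L_t^{1/2}]$, which is finite precisely because $(C)$ guarantees $\int_{x>1}x^{1/2}\,\nu(dx)<\infty$.
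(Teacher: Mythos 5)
Your proposal is correct, and it diverges from the paper mainly in how much it re-derives. The paper dispatches $(i)$ and $(ii)$ in a single sentence by citing Sato's subordination theorem (Theorem 30.1 in \cite{sato}), which directly yields both the characteristic function $\exp\{-t\Phi(\mu^2/2)\}$ and the generating triplet (zero drift, diffusion coefficient $a$, L\'evy measure $\pi$). You instead reconstruct this by hand: conditioning on $L_t$ for $(i)$, and for $(ii)$ the identity $e^{-\mu^2 s/2}-1=\int_\R\bigl(e^{i\mu x}-1-i\mu x\ind_{\abs{x}<1}\bigr)p_s(x)\,dx$ for the Gaussian kernel $p_s(x)=(2\pi s)^{-1/2}e^{-x^2/(2s)}$, followed by integration against $\nu$ and a Fubini interchange justified by $\abs{e^{i\mu x}-1-i\mu x\ind_{\abs{x}<1}}\le C_\mu(x^2\wedge1)$ together with $\int_0^\infty(s\wedge1)\,\nu(ds)<\infty$. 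This buys a transparent identification of the triplet and, as a by-product, a verification that $\pi$ is a genuine L\'evy measure--something the paper gets for free from the citation--though you still invoke the subordination theorem for the L\'evy property itself (stationary independent increments, c\`adl\`ag paths), so the citation is not entirely avoided. For $(iii)$, your Tonelli computation $\int_\R\abs{x}\,\pi(dx)=\sqrt{2/\pi}\int_0^\infty s^{1/2}\,\nu(ds)$ is exactly the paper's calculation, and your explicit appeal to the moment criterion (Theorem 25.3 in \cite{sato}) to pass from $\int_{\abs{x}>1}\abs{x}\,\pi(dx)<\infty$ to $\E\abs{W^L_t}<\infty$ supplies the step the paper leaves implicit behind the word ``therefore''. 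Incidentally, your conditioning formula $\E\abs{W^L_t}=\sqrt{2/\pi}\,\E\bigl[L_t^{1/2}\bigr]$ shows that the equality $\E\abs{W^L_t}=t\,\E\abs{W^L_1}$, as literally stated in the lemma, cannot hold in general, since $t\mapsto\E\bigl[L_t^{1/2}\bigr]$ is not linear (consider a stable subordinator); what both you and the paper actually establish is the finiteness claim.
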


\begin{proof}Statements $(i)$ and $(ii)$  immediately follow  from  Theorem 30.1 \cite{sato}. To prove $(iii)$, note that
\begin{equation}\label{eq:pi_nu}
\begin{split}
\int_\R\abs{x}\pi(dx)
&=\int_\R\abs{x}\varrho(x)\,dx
=\int_\R\int_0^\infty\abs{x}(2\pi s)^{-\frac{1}{2}}\exp\left\{-\frac{x^2}{2s}\right\}\nu(ds)\,dx\\
&=2(2\pi)^{-\frac12}\int_0^\infty s^{\frac12}\nu(ds) \int_0^\infty ze^{-\frac{z^2}{2}}\,dz\\
&=2(2\pi)^{-\frac12}\int_0^\infty s^{\frac12}\nu(ds)
<\infty.
\end{split}
\end{equation}
\end{proof}

\begin{remark} Note that    the density $\varrho$ is a symmetric  function therefore  the following equality holds: $\int_\R x1_{|x|<1} \pi(dx)=0$, and we can rewrite \eqref{equ1} as
\begin{gather} \label{equ2}
 \E\exp\left\{i\mu W^L_t\right\}=\exp\left\{t\Psi(\mu)\right\}:=
\exp\left\{t\left(-\frac{a\mu^2}{2}+ \int_\R\left( e^{i\mu x}-1\right)\pi(dx)\right)\right\}.
\end{gather}
\end{remark}

Here below we can consider three particular cases.

\subsubsection{Subordinate  Wiener process as a  square integrable martingale}\label{sssec1.1.1}

Introduce the natural  filtration  $\F^{W^L}=\set{\F_s^{W^L},s\ge0}$,
where
$\F_s^{W^L}=\sigma\set{W^L_u,0\le u\le s}$. Introduce the condition
\begin{itemize}
\item [(D)] $\int\limits_0^{\infty}x \nu(dx)<\infty.$
\end{itemize}
Condition $(D)$ is equivalent to the existence of the expectation of $L_t$ for any $t\geq 0$, because $$\E L_t=t\E L_1=t\Phi'(\lambda)\big|_{\lambda=0}=t(a+\int_0^{\infty}x \nu(dx)).$$
\begin{lemma}\label{lem1}
Under condition $(D)$ we have that $\int_{\R}x^2\pi(dx)<\infty$ and the process $W^L$ is a square-integrable martingale w.r.t. the natural filtration with the quadratic characteristic $\langle W^L\rangle_t=ct$, where $c=\E L_1.$
 \end{lemma}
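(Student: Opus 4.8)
The plan is to verify the three assertions in turn, building on Lemma~\ref{theor1:1}, which already identifies $W^L$ as a L\'evy process with characteristic triplet $(a,0,\pi)$, i.e.\ diffusion coefficient $a$, zero drift and L\'evy measure $\pi$. First I would establish the finiteness of the second moment of $\pi$ by repeating the Tonelli argument used in~\eqref{eq:pi_nu}, but with $\abs{x}$ replaced by $x^2$. Writing
\[
\int_\R x^2\,\pi(dx)
=\int_0^\infty\left(\int_\R x^2(2\pi s)^{-\frac12}\exp\set{-\tfrac{x^2}{2s}}\,dx\right)\nu(ds),
\]
the inner integral is simply the variance of the $N(0,s)$ distribution, hence equal to $s$. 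Therefore $\int_\R x^2\,\pi(dx)=\int_0^\infty s\,\nu(ds)$, which is finite precisely by condition~$(D)$. This same identity also yields $\int_\R x^2\,\pi(dx)=\int_0^\infty x\,\nu(dx)$, a fact I will reuse at the end.

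Next I would deduce integrability and the martingale property. Since $\int_\R x^2\,\pi(dx)<\infty$ (in particular $\int_{\abs{x}>1}\abs{x}\,\pi(dx)<\infty$), the L\'evy process $W^L$ has finite second moment. Conditioning on the subordinator, $W^L_t=W(L_t)$ is, given $L_t$, distributed as $N(0,L_t)$; hence $\E W^L_t=0$ and $\E\abs{W^L_t}^2=\E L_t=t\,\E L_1=ct$ with $c=\E L_1<\infty$. As $W^L$ is a L\'evy process adapted to its natural filtration $\F^{W^L}$ with independent, stationary and centred increments, the martingale property $\E[W^L_t-W^L_s\mid\F_s^{W^L}]=\E W^L_{t-s}=0$ follows at once, so $W^L$ is a square-integrable martingale.

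Finally, for the quadratic characteristic I would use that for a centred square-integrable L\'evy process the predictable quadratic variation is deterministic and linear, $\langle W^L\rangle_t=t\,\E\abs{W^L_1}^2$; combining this with the second-moment computation gives $\langle W^L\rangle_t=ct$. Equivalently, one may invoke the triplet formula $\langle W^L\rangle_t=t\bigl(a+\int_\R x^2\,\pi(dx)\bigr)$ and identify $a+\int_\R x^2\,\pi(dx)=a+\int_0^\infty x\,\nu(dx)=\E L_1=c$, using the first step together with the expression $\E L_1=a+\int_0^\infty x\,\nu(dx)$ recorded before the statement. There is no genuine obstacle here; the only points requiring care are the Tonelli interchange and the Gaussian moment evaluation in the first step, and the consistent matching of the two forms $a+\int_\R x^2\,\pi(dx)$ and $\E L_1$ of the constant $c$, which is exactly what the identity $\int_\R x^2\,\pi(dx)=\int_0^\infty x\,\nu(dx)$ secures.
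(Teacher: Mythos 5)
Your proposal is correct and follows essentially the same route as the paper: the same Tonelli/Gaussian-moment computation gives $\int_\R x^2\,\pi(dx)=\int_0^\infty s\,\nu(ds)<\infty$, and the martingale property follows from $W^L$ being a centred, square-integrable L\'evy process. The only cosmetic difference is that you compute $\E W^L_t=0$ and $\E\abs{W^L_t}^2=\E L_t$ by conditioning on the subordinator, whereas the paper differentiates the characteristic function \eqref{ch_f}; you are also more explicit than the paper in identifying $\langle W^L\rangle_t=t\,\E\abs{W^L_1}^2=ct$ and in matching the constant via $\E L_1=a+\int_0^\infty x\,\nu(dx)$.
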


\begin{proof}Inequality $\int_{\R}x^2\pi(dx)<\infty$ is established similarly to statement $(iii)$ of Lemma \ref{theor1:1}.
The second statement is also easy to prove.
Let $\E L_1<\infty$.
Then it follows immediately from \eqref{ch_f} that
 \[
 \E W^L_1=\frac{1}{i}\Upsilon'(\mu)\big|_{\mu=0}=-\frac{1}{i}\mu\E L_1\big|_{\mu=0}=0,
 \]
and \[
\E (W^L_1)^2
 =\E L_1
\]
Namely, $W^L$ is a square integrable L\'evy process $W^L$ with zero-mean,
hence it is a martingale (see Proposition 3.17 in \cite{CT}).
By this we complete the proof.
\end{proof}

As an illustration, consider the Gamma subordinator $L$, which is a L\'evy process $L$ with zero drift and L\'evy measure of the form
\[
\nu(dx)=cx^{-1}e^{-\lambda x}\ind_{x\in\R^+}\,dx.
\]
The corresponding subordinate Wiener process $W^L$ has no diffusion part, conditions~(C) and ~ (D) hold, and it has L\'evy measure $\pi$ with the density of the form
\[
\varrho(x)=c(2\pi)^{-\frac12}\int_0^\infty s^{-\frac32}e^{-\lambda s-\frac{x^2}{2s}}\,ds.
\]
Evidently, in this case  $W^L$ is a square-integrable martingale.

\subsubsection{Subordinate  Wiener process with compound Poisson subordinator}

Let the L\'evy process $L$ be a compound Poisson process, that is equivalent to the fulfilment of the conditions $a=0$ and $\nu(\R^+)<\infty$.
In this case the subordinate Wiener process $W^L$ is a L\'evy process without diffusion component, with characteristic function
\[
\E\exp\set{i\mu W^L_t}=\exp\set{t\int_\R\left(e^{i\mu x}-1\right)\pi(dx)},
\]
and
\begin{align*}
\pi(\R)&=\int_\R\varrho(x)\,dx
=\int_\R\int_{\R^+}(2\pi s)^{-\frac12}e^{-\frac{x^2}{2s}}\,\nu(ds)\,dx\\
&=\int_{\R^+}\nu(ds)=\nu\left(\R^+\right)<\infty.
\end{align*}
Therefore, $W^L$ is a compound Poisson process.

\subsubsection{Subordinate  Wiener process with a stable subordinator}

Consider a measure $\nu_\alpha(dx)$ on $\R^+$ of the form
\[
\nu_\alpha(dx)=\frac{c}{x^{1+\alpha}}\ind_{\set{x\in\R^+}}\,dx.
\]
Then $\nu_\alpha$ is the L\'evy measure of some L\'evy process if and only if $\alpha\in(0,2)$, yet $\nu_\alpha$ is the L\'evy measure of some subordinator if and only if $\alpha\in(0,1)$.
So, a stable subordinator $L$ with index $\alpha\in(0,1)$ is a subordinator with zero drift and L\'evy measure $\nu_\alpha$.
Its moment generating function is given by
\[
\E e^{-\lambda L_t}=e^{-c_1t\lambda^\alpha},
\quad \lambda,t>0.
\]
In this case the subordinate Wiener process $W^L$ is $2\alpha$-stable and has a characteristic function of the form
\[
\E\exp\set{i\mu W^L_t}=\E\exp\set{-\frac{\mu^2}{2}L_t}
=\exp\set{-\frac{c_1}{2^\alpha}\mu^{2\alpha}t}.
\]
The moments $\E{L^\beta_t}$, $\beta>0$, for a stable subordinator with index $\alpha$ exist only for $\beta<\alpha$ and
\[
\E L^\beta_t =\frac{(c_1t)^{\frac{\beta}{\alpha}}\Gamma\left(1-\frac{\beta}{\alpha}\right)}{\Gamma(1-\beta)}.
\]
Therefore, for $\alpha\in(\frac12,1)$,
$\E{L_t}^{1/2}<\infty$ and
consequently  $\E\abs{W^L_t}<\infty$.
For any $\alpha\in(0,1)$
$\E (W^L)_t^2=\infty$.

\subsection{Integration of a non-random kernel with respect to a subordinate Wiener process}

Now we apply to a subordinate Wiener process $W^L$ the construction of integral w.\,r.\,t.\ a L\'evy process $Z$ from Section~\ref{sec:int}.
By Lemma~\ref{theor1:1}, $W^L$ is a L\'evy process with zero drift coefficient, the diffusion coefficient $a$, and the symmetric L\'evy measure $\pi$.
Similarly to \eqref{eq:pi_nu}, one can show that
\[
\int_\R\abs{x}^p\pi(dx)
\le C\int_0^\infty s^{p/2}\nu(ds).
\]
Then from Theorem~\ref{l:apriori-p} we immediately get the following result.

\begin{lemma}\hspace{5mm}
\begin{itemize}
\item[$(i)$]
Let $p\in[1,2)$, $f\in L_p([0,T])$, $a=0$,
and $\int_0^\infty s^{p/2}\nu(ds)<\infty$.
Then $f$ is $W^L$-integrable and
\begin{equation}\label{charact1:1a}
\E\abs{\int_{[0,T]}f\,dW^L}^p
\le C\norm{f}_{L_p([0,T])}^p\int_0^\infty s^{p/2}\nu(ds).
\end{equation}
\item[$(ii)$]
Let $p\ge2$, $f\in L_p([0,T])$,
and $\int_0^\infty s^{p/2}\nu(ds)<\infty$.
Then $f$ is $W^L$-integrable and
\begin{equation}\label{charact1:1b}
\E\abs{\int_{[0,T]}f\,dW^L}^p
\le C\left(a^{p/2}\norm{f}_{L_2([0,T])}^p+\norm{f}_{L_p([0,T])}^p\int_0^\infty s^{p/2}\nu(ds)\right).
\end{equation}
\end{itemize}
\end{lemma}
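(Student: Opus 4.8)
The plan is to reduce the statement entirely to the a priori estimates of Theorem~\ref{l:apriori-p}. By Lemma~\ref{theor1:1} the subordinate Wiener process $W^L$ is itself a L\'evy process with vanishing centring constant $b=0$, diffusion coefficient $a$, and a \emph{symmetric} L\'evy measure $\pi$ (its density $\varrho$ is even). Thus the structural hypotheses of Theorem~\ref{l:apriori-p} --- namely $b=0$ and symmetry of $\pi$ --- are automatically satisfied, and in part $(i)$ the extra requirement $a=0$ is exactly the standing assumption made there. Consequently the only thing that genuinely needs to be checked is that the integrability condition $\int_\R\abs{x}^p\,\pi(dx)<\infty$ holds and, more precisely, that this quantity is controlled by $\int_0^\infty s^{p/2}\,\nu(ds)$.

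The main (and essentially the only nonroutine) step is therefore to establish the bound $\int_\R\abs{x}^p\,\pi(dx)\le C\int_0^\infty s^{p/2}\,\nu(ds)$, which I would prove by mimicking the computation~\eqref{eq:pi_nu}. Writing $\pi(dx)=\varrho(x)\,dx$ with the explicit Gaussian-mixture density and applying Tonelli's theorem (all integrands being non-negative), I would interchange the order of integration to obtain
\[
\int_\R\abs{x}^p\,\pi(dx)
=\int_0^\infty(2\pi s)^{-1/2}\left(\int_\R\abs{x}^p e^{-x^2/(2s)}\,dx\right)\nu(ds).
\]
The inner integral is handled by the substitution $x=\sqrt{s}\,z$, which produces the scaling factor $s^{(p+1)/2}$ times the finite absolute Gaussian moment $m_p:=\int_\R\abs{z}^p e^{-z^2/2}\,dz$; after cancelling $s^{-1/2}$ this leaves exactly $(2\pi)^{-1/2}m_p\int_0^\infty s^{p/2}\,\nu(ds)$, giving the claimed inequality (indeed equality) with $C=(2\pi)^{-1/2}m_p$. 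Under the hypothesis $\int_0^\infty s^{p/2}\,\nu(ds)<\infty$ this shows $\int_\R\abs{x}^p\,\pi(dx)<\infty$, so $f$ is $W^L$-integrable by Lemma~\ref{l:levy2}.

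It then remains only to invoke Theorem~\ref{l:apriori-p} and substitute. In case $(i)$, with $a=0$, estimate~\eqref{eq:apr1} yields $\E\abs{\int_{[0,T]}f\,dW^L}^p\le C\norm{f}_{L_p([0,T])}^p\int_\R\abs{x}^p\,\pi(dx)$, and replacing $\int_\R\abs{x}^p\,\pi(dx)$ by $C\int_0^\infty s^{p/2}\,\nu(ds)$ gives~\eqref{charact1:1a}. In case $(ii)$, with $p\ge2$, estimate~\eqref{eq:apr2} gives the two-term bound, whose second term is likewise dominated by $\norm{f}_{L_p([0,T])}^p\int_0^\infty s^{p/2}\,\nu(ds)$, producing~\eqref{charact1:1b}. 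I do not anticipate any real obstacle: the whole argument is a direct application of Theorem~\ref{l:apriori-p} once the moment transfer $\int_\R\abs{x}^p\,\pi(dx)\le C\int_0^\infty s^{p/2}\,\nu(ds)$ is in place, and that transfer is just the Fubini-plus-Gaussian-scaling calculation already rehearsed in~\eqref{eq:pi_nu} for the case $p=1$.
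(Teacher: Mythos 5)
Your proposal is correct and follows essentially the same route as the paper: identify $W^L$ as a L\'evy process with $b=0$, diffusion coefficient $a$, and symmetric L\'evy measure $\pi$, transfer moments via $\int_\R\abs{x}^p\,\pi(dx)\le C\int_0^\infty s^{p/2}\,\nu(ds)$ (the computation of~\eqref{eq:pi_nu} generalized from $p=1$ to general $p$ by Tonelli and Gaussian scaling), and then invoke Theorem~\ref{l:apriori-p}. Your write-up merely makes explicit the Fubini/scaling details that the paper leaves as ``similarly to~\eqref{eq:pi_nu}''.
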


\begin{remark}\label{rem5}
Let $L_t\in L_1(\P)$ for any $t\in[0,T]$ so that $W^L_t\in L_2(\P)$ for any $t\in[0,T]$, and $W^L$ is a square-integrable martingale.
We can create the sequence of partitions
$\pi_n=\set{0=t_0^n<t_1^n<\ldots<t_{k_n}^n=T}$
with $\diam\pi_n\to0$, $n\to0$ and choose in Definition~\ref{def:1} the sets $A_j^n=[t_j^n,t_{j+1}^n)$.
Then for any $f\in L_2([0,T])$ we see that
$\int_{[0,T]}f\,dW^L$
coincides with the Wiener integral
$\int_0^Tf(s)\,dW^L_s$
of the non-random function w.\,r.\,t.\ a square-integrable martingale.
\end{remark}

Now, let us have a two-parameter measurable non-random kernel of the form $g=g(t,s)\colon\R_+^2\to\R$.
Using Theorem~\ref{th:2}, we obtain the following statement.

\begin{lemma}\hspace{5mm}
\begin{itemize}
\item[$(i)$]
Let for some $p\in[1,2)$  $g=g(t,\cdot)\in L_p([0,t])$ for any $t\in[0,T]$,
$a=0$, and
$\int_0^\infty s^{p/2}\nu(ds)<\infty$.
Then for any $t\in[0,T]$ $g(t,\cdot)$ is $W^L$-integrable, and
\begin{equation}\label{charact2:1a}
\E\abs{\int_{[0,t]}g(t,s)\,dW^L_s}^p
\le C\norm{g(t,\cdot)}_{L_p([0,t])}^p\int_0^\infty s^{p/2}\nu(ds).
\end{equation}
\item[$(ii)$]
Let for some $p\ge2$  $g=g(t,\cdot)\in L_p([0,t])$ for any $t\in[0,T]$, and\linebreak
$\int_0^\infty s^{p/2}\nu(ds)<\infty$.
Then for any $t\in[0,T]$ $g(t,\cdot)$ is $W^L$-integrable, and
\begin{multline}\label{charact2:1b}
\E\abs{\int_{[0,t]}g(t,s)\,dW^L_s}^p\\
\le C\left(a^{p/2}\norm{g(t,\cdot)}_{L_2([0,t])}^p+\norm{g(t,\cdot)}_{L_p([0,t])}^p\int_0^\infty s^{p/2}\nu(ds)\right).
\end{multline}
\end{itemize}
\end{lemma}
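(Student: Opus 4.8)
The plan is to recognize that this statement is the two-parameter kernel version of the preceding lemma and is obtained as a direct specialization of Theorem~\ref{th:2} to the integrator $Z=W^L$. Hence the real content reduces to two checks: that the characteristic triplet of the subordinate Wiener process fits the hypotheses of Theorem~\ref{th:2}, and that the $p$-th moment $\int_\R\abs{x}^p\pi(dx)$ of its L\'evy measure is controlled by $\int_0^\infty s^{p/2}\nu(ds)$.

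First I would invoke Lemma~\ref{theor1:1}$(ii)$ together with the remark following it, which establish that $W^L$ is a L\'evy process whose triplet has zero drift ($b=0$), diffusion coefficient $a$, and symmetric L\'evy measure $\pi$ (symmetry being immediate from the fact that the density $\varrho$ is an even function). Consequently, in case $(i)$, where $a=0$ is assumed, the triplet meets condition $(A)$ of Theorem~\ref{th:2}, and in case $(ii)$ it meets condition $(B)$, provided only that the integrability requirement $\int_\R\abs{x}^p\pi(dx)<\infty$ is verified.

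Next I would record the key estimate $\int_\R\abs{x}^p\pi(dx)\le C\int_0^\infty s^{p/2}\nu(ds)$, which was already noted in the text just above and is proved exactly as in \eqref{eq:pi_nu}: one writes $\pi(dx)=\varrho(x)\,dx$, inserts the definition of $\varrho$, applies Fubini's theorem, and performs the substitution $z=x/\sqrt{s}$ to split the double integral into the product of $\int_0^\infty s^{p/2}\nu(ds)$ and the finite Gaussian constant $(2\pi)^{-1/2}\int_\R\abs{z}^p e^{-z^2/2}\,dz$. Under the standing hypothesis $\int_0^\infty s^{p/2}\nu(ds)<\infty$ this yields $\int_\R\abs{x}^p\pi(dx)<\infty$, completing the verification of the hypotheses of Theorem~\ref{th:2}.

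With these checks in hand, Theorem~\ref{th:2} applies verbatim and delivers both the $W^L$-integrability of $g(t,\cdot)$ for every $t\in[0,T]$ and the a priori bounds \eqref{estimator3} (in case $(i)$) and \eqref{estimator2} (in case $(ii)$), each carrying the factor $\int_\R\abs{x}^p\pi(dx)$ on the right-hand side. To conclude, I would substitute the estimate of the previous paragraph for this factor and absorb the resulting numerical constant into $C$, thereby obtaining precisely \eqref{charact2:1a} and \eqref{charact2:1b}. There is no genuine obstacle in this argument; the only computation demanding any care is the Gaussian moment bound on $\int_\R\abs{x}^p\pi(dx)$, and it is entirely routine.
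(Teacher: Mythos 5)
Your proposal is correct and follows essentially the same route as the paper: the paper itself derives this lemma by identifying, via Lemma~\ref{theor1:1}, the triplet of $W^L$ (zero drift, diffusion coefficient $a$, symmetric L\'evy measure $\pi$), noting the bound $\int_\R\abs{x}^p\pi(dx)\le C\int_0^\infty s^{p/2}\nu(ds)$ proved as in \eqref{eq:pi_nu}, and then applying Theorem~\ref{th:2}. Your write-up merely makes explicit the Gaussian moment computation and the verification of conditions $(A)$ and $(B)$, which the paper leaves implicit.
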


\begin{remark}\label{rem6}
Let $g=g(t,\cdot)\in L_2([0,t])$ for any $t\in[0,T]$,
and let the subordinate Wiener process satisfy condition $(D)$.
Then for any $t\in[0,T]$ $g(t,\cdot)$ is $W^L$-integrable, and
\begin{equation}\label{estimator4}
\E\abs{\int_0^tg(t,s)\,dW^L_s}^2
\le C\norm{g(t,\cdot)}_{L_2([0,t])}^2\int_0^\infty x\,\nu(dx).
\end{equation}
For any $t\in[0,T]$, the integral $\int_0^tg(t,s)\,dW^L_s$  coincides with the integral of $g(t,\cdot)$ w.\,r.\,t.\ a square-integrable martingale $W^L$.
Moreover, as it will be clarified by the calculations in the sequel, when $W^L$ is a square-integrable  martingale, it is not really important that $W^L$ is a L\'evy process and $\langle W^L\rangle_t=ct$ with $c=\E L_1$. It is actually important that $W^L$ is a square integrable martingale.
Indeed, wWe can consider any square-integrable martingale
$M=\set{M_t,\F_t,t\in[0,T]}$
with
$\langle M\rangle_t=\int_0^t\sigma^2(s)\,ds$,
where $\sigma$ is a random measurable adapted function with bounded expectation, $\E\sigma^2(s)\leq C$,
and all the results will be preserved.
\end{remark}

\section{Elements of fractional calculus and existence of the generalized Lebesgue-Stieltjes integrals}\label{sec:integration}

\setcounter{section}{4}
\setcounter{equation}{0}\setcounter{theorem}{0}

\subsection{Elements of fractional calculus}
In this subsection we describe a construction of the path-wise integral following the approach developed by Z\"ahle~\cite{Zahle98,Zahle99,Zahle01}.
We start by introducing the notions of fractional integrals and derivatives.
See~\cite{Samko} for the details on the concept of fractional calculus.

\begin{definition}
Let $f\in L_1(a,b)$.
\emph{The Riemann-Liouville left- and right-sided fractional integrals of order $\alpha>0$} are defined for almost all $x\in(a,b)$ by
\begin{align*}
I^\alpha_{a+} f(x):=\frac1{\Gamma(\alpha)}
\int_a^x(x-y)^{\alpha-1}f(y)\,dy
,\\
I^\alpha_{b-} f(x):=\frac{(-1)^{-\alpha}}{\Gamma(\alpha)}
\int_x^b(y-x)^{\alpha-1}f(y)\,dy,
\end{align*}
respectively, where $(-1)^{-\alpha}=e^{-i\pi\alpha}$, $\Gamma$ denotes the Gamma function.
\end{definition}
Denote by $I^\alpha_{a+}(L_p)$ (resp.\ $I^\alpha_{b-}(L_p)$) the class of functions $f$ that can be presented as
$f=I^\alpha_{a+}\varphi$ (resp.\ $f=I^\alpha_{b-}\varphi$)
for $\varphi\in L_p(a,b)$.

\begin{definition}
For a function $f\colon[a,b]\to \R$
\emph{the Riemann-Liouville left- and right-sided fractional derivatives of order $\alpha$} ($0<\alpha<1$) are defined by
\begin{align*}
\D^\alpha_{a+} f(x):=\ind_{(a,b)}(x)\,\frac1{\Gamma(1-\alpha)}\,\frac{d}{dx}
\int_a^x\frac{f(y)}{(x-y)^{\alpha}}\,dy
,\\
\D^\alpha_{b-}
f(x):=\ind_{(a,b)}(x)\,\frac{(-1)^{1+\alpha}}{\Gamma(1-\alpha)}\,\frac{d}{dx}
\int_x^b\frac{f(y)}{(y-x)^{\alpha}}\,dy.
\end{align*}
\end{definition}
The Riemann-Liouville fractional derivatives admit
the following \emph{Weyl representation}
\begin{align*}
\D^\alpha_{a+} f(x)=\frac1{\Gamma(1-\alpha)}\left(\frac{f(x)}{(x-a)^\alpha}+
\alpha\int_a^x\frac{f(x)-f(y)}{(x-y)^{\alpha+1}}\,dy\right)\ind_{(a,b)}(x),\\
\D^\alpha_{b-}
f(x)=\frac{(-1)^{\alpha}}{\Gamma(1-\alpha)}\left(\frac{f(x)}{(b-x)^\alpha}+
\alpha\int_x^b\frac{f(x)-f(y)}{(y-x)^{\alpha+1}}\,dy\right)\ind_{(a,b)}(x),
\end{align*}
where the convergence of the integrals holds pointwise for a.\,a.\ $x\in(a,b)$ for $p = 1$ and in $L_p(a,b)$ for $p>1$.

\vspace{2mm}
Let $f,g\colon[a,b]\to\R$.
Assume that the limits
\[
f(u+):=\lim_{\delta\downarrow0}f(u+\delta)
\quad\text{and}\quad
g(u-):=\lim_{\delta\downarrow0}f(u-\delta)
\]
exist for $a\le u\le b$.
Denote
\begin{align*}
f_{a+}(x)&=(f(x)-f(a+))\ind_{(a,b)}(x),\\
g_{b-}(x)&=(g(b-)-g(x))\ind_{(a,b)}(x).
\end{align*}

\begin{definition}[\cite{Zahle98}]\label{def:int Stilt}
Assume that
$f_{a+}\in I^\alpha_{a+}(L_p),\  g_{b-}\in
I^{1-\alpha}_{b-}(L_q)$
for some $1/p+1/q\leq1$, $0<\alpha<1$.
\emph{The generalized (fractional) Lebesgue-Stieltjes integral} of $f$ with respect to $g$ is defined by
\begin{equation}\label{eq:int Stilt}
\begin{split}
\int_a^b f(x)\,dg(x):=&(-1)^\alpha\int_a^b
\D^\alpha_{a+}f_{a+}(x)\,\D^{1-\alpha}_{b-}g_{b-}(x)\,dx+\\
&+f(a+)\bigl(g(b-)-g(a+)\bigr).
\end{split}
\end{equation}
\end{definition}
Note  that this definition is independent of the choice of $\alpha$ (\cite[Prop.~2.1]{Zahle98}).
If $\alpha p<1$, then \eqref{eq:int Stilt} can be simplified to
\[
\int_a^b f(x)\,dg(x):=(-1)^\alpha\int_a^b
\D^\alpha_{a+}f(x)\,\D^{1-\alpha}_{b-}g_{b-}(x)\,dx.
\]
In particular, Definition~\ref{def:int Stilt} allows to integrate H\"older continuous functions.
\begin{definition}
Let $0<\lambda\le1$.
A function
$f\colon\R\to\R$
belongs to
$C^\lambda[a,b]$
if there exists a constant $C>0$ such that, for all $s,t\in[a,b]$,
\[
\abs{f(s)-f(t)}\le C\abs{s-t}^\lambda,\quad s,t\in [a,b].
\]
\end{definition}

\begin{proposition}[{\cite[Th~4.2.1]{Zahle98}}]
Let $f\in C^\lambda[a,b]$ and $g\in C^\mu[a,b]$ with $\lambda+\mu>1$.
Then the assumptions of Definition~\ref{def:int Stilt} are satisfied with any $\alpha\in(1-\mu,\lambda)$ and $p=q=\infty$.
Moreover, the generalised Lebesgue-Stieltjes integral $\int_a^b f(x)\,dg(x)$  defined by~\eqref{eq:int Stilt} coincides with the Riemann-Stieltjes integral
\begin{align*}
\{R-S\}\int_a^b f(x)\,dg(x):=\lim_{\abs{\pi}\to0}\sum_i
f(x_i^*)(g(x_{i+1})-g(x_i)),
\end{align*}
where $\pi=\{a=x_0\leq x_0^*\leq x_1\leq \ldots \leq x_{n-1}\leq
x_{n-1}^*\leq x_n=b\}$, and
$\abs{\pi}=\max_i\abs{x_{i+1}-x_i}$.
\end{proposition}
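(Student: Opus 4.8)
The plan is to reduce the claimed equality between the generalized fractional Lebesgue–Stieltjes integral and the Riemann–Stieltjes integral to an application of the fractional integration-by-parts formula, and then to control the Riemann sums by exploiting the Hölder regularity of both $f$ and $g$. First I would observe that under the hypotheses $f\in C^\lambda[a,b]$ and $g\in C^\mu[a,b]$ with $\lambda+\mu>1$, the choice $\alpha\in(1-\mu,\lambda)$ makes both $\alpha<\lambda$ and $1-\alpha<\mu$ strict, so that $f_{a+}$ is Hölder of order $\lambda>\alpha$ and $g_{b-}$ is Hölder of order $\mu>1-\alpha$. Using the Weyl representation of the fractional derivatives, the kernels $\tfrac{f(x)-f(y)}{(x-y)^{\alpha+1}}$ and $\tfrac{g(x)-g(y)}{(y-x)^{2-\alpha}}$ are then integrable, yielding $\D^\alpha_{a+}f_{a+}\in L_\infty(a,b)$ and $\D^{1-\alpha}_{b-}g_{b-}\in L_\infty(a,b)$; this verifies that the assumptions of Definition~\ref{def:int Stilt} hold with $p=q=\infty$, so $1/p+1/q=0\le1$, and that the right-hand side of \eqref{eq:int Stilt} is well defined.

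Next I would establish the identification with the Riemann–Stieltjes limit. The natural route is to write, for a fixed partition $\pi$, the Riemann–Stieltjes sum $\sum_i f(x_i^*)\bigl(g(x_{i+1})-g(x_i)\bigr)$ and to recognise each increment $g(x_{i+1})-g(x_i)$ as a generalized integral $\int_{x_i}^{x_{i+1}} dg$ of the constant function $f(x_i^*)$ over $[x_i,x_{i+1}]$, by the base case of Definition~\ref{def:int Stilt} applied to a constant integrand. Then the additivity of the fractional integral over adjacent subintervals, together with the independence of \eqref{eq:int Stilt} of the choice of $\alpha$ (cited right after the definition), lets me compare the sum to the single integral $\int_a^b f\,dg$ on the whole interval. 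The difference is then expressed as $(-1)^\alpha\int_a^b \D^\alpha_{a+}\bigl(f-\bar f_\pi\bigr)(x)\,\D^{1-\alpha}_{b-}g_{b-}(x)\,dx$ plus boundary corrections, where $\bar f_\pi$ is the step function equal to $f(x_i^*)$ on each subinterval.

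I would then estimate this difference and send $\abs{\pi}\to0$. The factor $\D^{1-\alpha}_{b-}g_{b-}$ is bounded in $L_\infty$ by the Hölder constant of $g$, as noted above, so it suffices to show that $\D^\alpha_{a+}\bigl(f-\bar f_\pi\bigr)\to0$ in $L_1(a,b)$ as the mesh shrinks. Using the Weyl representation once more, this fractional derivative is controlled by the local oscillation of $f-\bar f_\pi$, which is $O(\abs{\pi}^\lambda)$ on each subinterval by the Hölder continuity of $f$; the singular kernel $(x-y)^{-\alpha-1}$ is integrable against this oscillation precisely because $\lambda>\alpha$, and the pointwise terms $\tfrac{(f-\bar f_\pi)(x)}{(x-a)^\alpha}$ are handled by the same Hölder bound together with the integrability of $(x-a)^{-\alpha}$. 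Collecting these estimates gives a bound of the form $C\abs{\pi}^{\lambda-\alpha}\to0$.

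The main obstacle I anticipate is the careful bookkeeping in the third step: the step function $\bar f_\pi$ is discontinuous at the partition points, so $\D^\alpha_{a+}\bigl(f-\bar f_\pi\bigr)$ contains contributions from the jumps of $\bar f_\pi$, and one must check that these jump contributions, together with the boundary terms $f(a+)\bigl(g(b-)-g(a+)\bigr)$ arising in \eqref{eq:int Stilt} on each subinterval, telescope correctly and do not obstruct convergence. Managing these endpoint and jump terms uniformly in the partition — rather than the convergence estimate itself, which is a routine singular-integral bound once $\lambda>\alpha$ and $\mu>1-\alpha$ are in hand — is the delicate part, and it is exactly where the $\alpha$-independence of the integral and the additivity over subintervals must be invoked to absorb the corrections.
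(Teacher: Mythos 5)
First, a point of comparison: the paper does not prove this proposition at all --- it is quoted from \cite[Th.~4.2.1]{Zahle98} --- so your attempt can only be judged on its own merits and against Z\"ahle's original argument. Your overall strategy (verify the hypotheses of Definition~\ref{def:int Stilt} from H\"older regularity, then compare Riemann--Stieltjes sums with the fractional integral through a step-function approximation) is a legitimate and fairly standard route. Your first step is essentially correct, with one small caveat: boundedness of the Weyl expression is not literally the same as membership in $I^\alpha_{a+}(L_\infty)$ resp.\ $I^{1-\alpha}_{b-}(L_\infty)$; you also need the inversion $f_{a+}=I^\alpha_{a+}\D^\alpha_{a+}f_{a+}$, i.e.\ the standard embedding of H\"older classes of order $\lambda>\alpha$ (vanishing at the endpoint) into the image of the fractional integral operator, see \cite{Samko}. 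This is routine but should be cited rather than inferred from boundedness alone.

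The genuine gap is in your convergence estimate, and it is precisely the part you dismiss as ``routine.'' Your bound uses only the uniform oscillation $\norm{f-\bar f_\pi}_\infty=O(\abs{\pi}^\lambda)$. But $f-\bar f_\pi$ jumps at the partition points, so in the Weyl representation of $\D^\alpha_{a+}(f-\bar f_\pi)$ the pairs $x,y$ lying in \emph{different} subintervals near a common endpoint $x_i$ are not controlled by $\abs{x-y}^\lambda$; with your uniform bound they contribute, after integration in $x$ over $[x_i,x_{i+1}]$, a term of order $\abs{\pi}^\lambda\sum_i(x_{i+1}-x_i)^{1-\alpha}$. This does \emph{not} tend to zero for arbitrary partitions: e.g.\ cut half of $[a,b]$ into intervals of length $\abs{\pi}=1/m$ and the other half into $N$ equal pieces with $N\ge m^{2\lambda/\alpha}$; then $\abs{\pi}^\lambda\sum_i(x_{i+1}-x_i)^{1-\alpha}\ge c\,m^{-\lambda}N^{\alpha}\to\infty$. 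Since the Riemann--Stieltjes limit in the statement runs over \emph{all} partitions, your claimed bound $C\abs{\pi}^{\lambda-\alpha}$ only holds for quasi-uniform ones, and the proof as written does not close. Two standard repairs: (i) replace the uniform bound by the local one --- the jump of $\bar f_\pi$ at $x_i$ is at most $C(x_{i+1}-x_{i-1})^\lambda$ --- after which Young's inequality for products turns the problematic sum into $C\sum_i(x_{i+1}-x_i)^{\lambda+1-\alpha}\le C\abs{\pi}^{\lambda-\alpha}(b-a)$; or, cleaner, (ii) never differentiate the discontinuous step function globally: use additivity of the integral (which you already invoke) to write the difference as $\sum_i\int_{x_i}^{x_{i+1}}\bigl(f-f(x_i^*)\bigr)\,dg$ and estimate each summand on its own interval by $C\norm{f}_{C^\lambda}\norm{g}_{C^\mu}(x_{i+1}-x_i)^{\lambda+\mu}$, giving a total of order $\abs{\pi}^{\lambda+\mu-1}\to0$ for arbitrary partitions, since $\lambda+\mu>1$. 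Note also that your diagnosis of where the difficulty lies is misplaced: the telescoping of boundary terms and the $\alpha$-independence of the integral are harmless; the uniformity in the partition of the singular-integral estimate is the real issue.
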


\subsection{Generalised Lebesgue-Stieltjes integral for stochastic processes. }
Consider two real-valued stochastic processes $X=\{X_t, t\in [0,T]\}$ and $Y=\{Y_t, t\in [0,T]\}$. We   say that $X$ and $Y$ are {\it fractionally $\alpha$-connected for some $t\in [0,T]$, and for  some $0<\alpha<1$} if the generalised Lebesgue-Stieltjes integral
\[
\int_0^tX_s\,dY_s:=\int_0^t\left(\D^{\alpha}_{0+}X\right)(s)\left(\D^{1-\alpha}_{t-}Y_{t-}\right)(s)\,ds
\]
exists with probability 1. Since the above integral is defined $\omega$ by $\omega$, it is called a pathwise integral. The next simple result allows us to ``separate" $X$ and $Y$ in the pathwise integral.
\begin{lemma}\label{pr:integr}
Assume that for some $t\in [0,T]$  and for  some $0<\alpha<1$ one of the following conditions hold:
\begin{itemize}
\item[$(i)$] \begin{equation} \int_0^t|\left(\D^{\alpha}_{0+}X\right)(s)|ds<\infty \;\text{a.s. and }\; \sup_{0\leq s\leq t}|\left(\D^{1-\alpha}_{t-}Y_{t-}\right)(s)|<\infty \;\text{a.s.}
    \end{equation}
\item[$(ii)$] \begin{equation} \sup_{0\leq s\leq t}|\left(\D^{\alpha}_{0+}X\right)(s)|<\infty \;\text{a.s. and }\int_0^t|\left(\D^{1-\alpha}_{t-}Y_{t-}\right)(s)|ds<\infty \;\text{a.s.}
    \end{equation}
\item[$(iii)$] for some $p>1$, $q>1$ such that $p^{-1}+q^{-1}=1$
\[
 \int_0^t\left|\left(\D^{\alpha}_{0+}X\right)(s)\right|^qds<\infty
\qquad\text{and}\qquad
 \int_0^t\left|\left(\D^{1-\alpha}_{t-}Y_{t-}\right)(s)\right|^pds<\infty.
\]
\end{itemize}
Then  $X$ and $Y$ are fractionally $\alpha$-connected for this value of $t\in [0,T]$.
\end{lemma}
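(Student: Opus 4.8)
The plan is to observe that, by the very definition given just above the statement, $X$ and $Y$ are fractionally $\alpha$-connected for the given $t$ precisely when the pathwise integral
\[
\int_0^t\left(\D^{\alpha}_{0+}X\right)(s)\left(\D^{1-\alpha}_{t-}Y_{t-}\right)(s)\,ds
\]
is well defined and finite for almost every $\omega$. Since this integral is taken $\omega$ by $\omega$, it suffices to show that, under each of the three hypotheses, the integrand belongs to $L_1([0,t])$ with probability one; the absolute convergence of a Lebesgue integral then guarantees its existence. Thus the whole argument reduces to a pathwise estimate of $\int_0^t\abs{(\D^{\alpha}_{0+}X)(s)}\,\abs{(\D^{1-\alpha}_{t-}Y_{t-})(s)}\,ds$ in each case.

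First I would treat case $(i)$. Factoring the essential supremum out of the integral, I bound
\[
\int_0^t\abs{(\D^{\alpha}_{0+}X)(s)}\,\abs{(\D^{1-\alpha}_{t-}Y_{t-})(s)}\,ds
\le\Bigl(\sup_{0\le s\le t}\abs{(\D^{1-\alpha}_{t-}Y_{t-})(s)}\Bigr)\int_0^t\abs{(\D^{\alpha}_{0+}X)(s)}\,ds,
\]
and the right-hand side is finite a.s.\ by the two assumptions in $(i)$. Case $(ii)$ is entirely symmetric: the same inequality holds with the roles of the two factors interchanged, so that the integrable factor is now $\D^{1-\alpha}_{t-}Y_{t-}$ and the bounded one is $\D^{\alpha}_{0+}X$, and again the product lies a.s.\ in $L_1([0,t])$.

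For case $(iii)$ I would simply apply H\"older's inequality with the conjugate exponents $p,q$ satisfying $p^{-1}+q^{-1}=1$, obtaining
\[
\int_0^t\abs{(\D^{\alpha}_{0+}X)(s)}\,\abs{(\D^{1-\alpha}_{t-}Y_{t-})(s)}\,ds
\le\norm{\D^{\alpha}_{0+}X}_{L_q([0,t])}\,\norm{\D^{1-\alpha}_{t-}Y_{t-}}_{L_p([0,t])},
\]
which is finite a.s.\ precisely by the two integrability conditions assumed in $(iii)$. In all three cases the integrand is absolutely integrable on $[0,t]$ with probability one, hence the pathwise integral exists a.s., which is exactly the assertion that $X$ and $Y$ are fractionally $\alpha$-connected for this value of $t$. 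I do not expect any genuine obstacle here: the statement is a direct pathwise application of the elementary $L_1$--$L_\infty$ and H\"older estimates, and the only point to keep in mind is that each hypothesis already holds almost surely, so that the corresponding bound, and therefore the existence of the integral, hold almost surely as well.
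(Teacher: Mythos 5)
Your proof is correct and is precisely the elementary argument the paper has in mind: the paper states this lemma without proof (calling it a ``simple result''), and the intended justification is exactly your reduction to a.s.\ absolute integrability of the product of fractional derivatives via the $L_1$--$L_\infty$ bound in cases $(i)$--$(ii)$ and H\"older's inequality with conjugate exponents in case $(iii)$. Nothing is missing.
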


Taking the above lemma into account, we introduce the classes of stochastic processes
\begin{align*}
\D^+_q(\alpha,T)&:=\set{X=\{X_t, t\in [0,T]\}: \int_0^T|\left(\D^{\alpha}_{0+}X\right)(s)|^qds<\infty \;\text{a.s.}},
\intertext{$1\leq q<\infty$, and}
\D^+_\infty(\alpha,T)&:=\set{X=\{X_t, t\in [0,T]\}: \sup_{0\leq s\leq T}|\left(\D^{\alpha}_{0+}X\right)(s)|<\infty},
\end{align*}
and, correspondingly,
\begin{align*}
\D^{-}_p(\alpha,T)&:=\set{Y=\{Y_t, t\in [0,T]\}:\right.\\ &\quad\left. \int_0^t|\left(\D^{1-\alpha}_{t-}Y_{t-}\right)(s)|^pds<\infty \;\text{a.\,s.}, t\in [0,T]}, 1\leq p<\infty,
\intertext{and}
\D^-_\infty(\alpha,T)&:=\set{Y=\{Y_t, t\in [0,T]\}: \sup_{0\leq s\leq t}|\left(\D^{1-\alpha}_{t-}Y_{t-}\right)(s)|<\infty, t\in [0,T]\!}\!.
\end{align*}
Then it follows that, for the couples ($X\in \D^+_1(\alpha)$, $Y\in \D^-_\infty(\alpha)$),\linebreak ($X\in \D^+_\infty(\alpha)$, $Y\in \D^-_1(\alpha)$), and ($X\in \D^+_q(\alpha)$, $Y\in \D^-_p(\alpha))$, $p>1$, $q>1$, $p^{-1}+q^{-1}=1$, we have that $X$ and $Y$ are fractionally $\alpha$-connected for any  $t\in [0,T]$.

Let the processes $Y\in \D^-_p(\alpha)$ be called {\it appropriate $(p,\alpha)$-integrators}, $p\in[1,+\infty] $ for $X\in \D^+_q(\alpha)$, $q= \frac{p}{p-1}$ (with $\frac{1}{0}=\infty$).

It follows from the a priori estimates of Section 2 that it is natural to formulate conditions on the process
$Y_t=\int_0^tg(t,s)\,dZ_s$
to be appropriate $(p,\alpha)$-integrator in terms of
expectations.
In this connection, we introduce the following classes of processes:
\begin{equation*}\begin{gathered}
\E\D^{-}_p(\alpha,T):=\{Y=\{Y_t, t\in [0,T]\}: \int_0^t\E|\left(\D^{1-\alpha}_{t-}Y_{t-}\right)(s)|^pds<\infty,\\   t\in [0,T]\}\subset\D^-_p(\alpha,T),\;p\geq 1,\end{gathered}\end{equation*}
and
\begin{equation*}\begin{gathered}\E\D^-_\infty(\alpha,T):=\{Y=\{Y_t, t\in [0,T]\}: \E\sup_{0\leq s\leq t}|\left(\D^{1-\alpha}_{t-}Y_{t-}\right)(s)|<\infty,\\ t\in [0,T]\}\subset\D^-_\infty(\alpha,T).\end{gathered}\end{equation*}

\section{General conditions for $Y_\cdot= \int_0^\cdot g(\cdot,s) dZ_s$ to be an appropriate $(p,\alpha)$-integrator}
\label{integration- central}

\setcounter{section}{5}
\setcounter{equation}{0}\setcounter{theorem}{0}

Now we formulate three results supplying the appropriate integrator properties of $Y_t=\int_0^tg(t,s)\,dZ_s, t\in[0,T]$.  Consider the fixed interval $[0,T]$, and let
 $g=g(t,s)\colon\{0\leq s\leq t\leq T\}\rightarrow \R$
be a non-random measurable kernel.

\subsection{The case  $p\in[1,2)$} We immediately formulate the following result.

\begin{theorem}\label{lem7} Let  $p\in[1,2)$,  $ \alpha\in(0,1)$,  $g=g(t,\cdot)\in L_p([0,t])$ for any $t\in[0,T]$,
$a=b=0$, the measure $\pi$ is symmetric with
$\int_\R\abs{x}^p\,\pi(dx)<\infty$, and let the following set of conditions hold:\\
{\bf Assumptions $(D_p)$}
\begin{enumerate}
\item
$\displaystyle
 \int_0^t(t-s)^{p\alpha-p}\left(\int_s^t\abs{g(t,v)}^p\,dv\right)ds<\infty$,
\item
$\displaystyle
\int_0^t(t-s)^{p\alpha-p}\left(\int_0^s\abs{g(t,v)-g(s,v)}^p\,dv\right)ds<\infty$,
\item
$\displaystyle
\int_0^t\int_s^t(u-s)^{p\alpha-2p}\left(\int_s^u\abs{g(u,v)}^p\,dv\right) du\,ds<\infty$,
\item
$\displaystyle
\int_0^t\int_s^t(u-s)^{p\alpha-2p}\left(\int_0^s\abs{g(u,v)-g(s,v)}^p\,dv\right) du\,ds<\infty$.
\end{enumerate}
Then
$Y=\set{Y_t=\int_0^tg(t,s)\,dZ_s, t\in[0,T]}\in \E\D^-_p(\alpha,T)$,
so, $Y$ is an appropriate $(p,\alpha)$-integrator for any $f\in\D^+_q(\alpha, T)$.
\end{theorem}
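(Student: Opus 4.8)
The goal is to show $Y \in \E\D^-_p(\alpha,T)$, i.e. that for each fixed $t\in[0,T]$,
\[
\int_0^t \E\,\bigl|\bigl(\D^{1-\alpha}_{t-}Y_{t-}\bigr)(s)\bigr|^p\,ds<\infty.
\]
The plan is to start from the Weyl representation of the right-sided fractional derivative and apply it to the path $s\mapsto Y_s$. Since $Y_{t-}(s)=(Y_t - Y_s)\ind_{(0,t)}(s)$ (note $Y$ has the natural boundary behaviour making $Y_{t-}(t-)=0$), the Weyl formula gives, up to the multiplicative constant $\frac{(-1)^{1-\alpha}}{\Gamma(\alpha)}$,
\[
\bigl(\D^{1-\alpha}_{t-}Y_{t-}\bigr)(s)
=\frac{Y_t-Y_s}{(t-s)^{1-\alpha}}
+(1-\alpha)\int_s^t\frac{Y_u-Y_s}{(u-s)^{2-\alpha}}\,du.
\]
First I would substitute $Y_u-Y_s=\int_0^u g(u,v)\,dZ_v-\int_0^s g(s,v)\,dZ_v$, and split this increment into the ``diagonal'' part $\int_s^u g(u,v)\,dZ_v$ and the ``kernel-difference'' part $\int_0^s\bigl(g(u,v)-g(s,v)\bigr)\,dZ_v$. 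This is exactly the decomposition that produces the four integrand pieces matching the four Assumptions $(D_p)$: the first (boundary) term of the Weyl derivative with its two sub-pieces yields conditions (1)–(2), and the integral term yields (3)–(4).

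Next I would bring the expectation inside. The key analytic input is the a priori moment bound of Theorem~\ref{l:apriori-p}(i), valid precisely under the stated hypotheses $a=b=0$, $\pi$ symmetric, $\int_\R|x|^p\pi(dx)<\infty$: for any $Z$-integrable $f$,
\[
\E\Bigl|\int_0^T f\,dZ\Bigr|^p\le C\,\|f\|_{L_p}^p\int_\R|x|^p\pi(dx).
\]
Applying this to each of the two stochastic-integral pieces of $Y_u-Y_s$ gives
\[
\E\bigl|{\textstyle\int_s^u} g(u,v)\,dZ_v\bigr|^p\le C\!\int_s^u\!|g(u,v)|^p\,dv,
\qquad
\E\bigl|{\textstyle\int_0^s}\!(g(u,v)-g(s,v))\,dZ_v\bigr|^p\le C\!\int_0^s\!|g(u,v)-g(s,v)|^p\,dv.
\]
To pass the expectation through the $s$-integral and the inner $du$-integral I would use Fubini (legitimate by nonnegativity of the integrands) together with the elementary inequality $|A+B|^p\le 2^{p-1}(|A|^p+|B|^p)$ to separate the diagonal and kernel-difference contributions. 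The boundary term contributes the weight $(t-s)^{(\alpha-1)p}=(t-s)^{p\alpha-p}$, matching (1)–(2) directly after taking expectations.

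The main technical obstacle, and the step requiring the most care, is the inner-integral term
\[
\E\Bigl(\int_s^t\frac{|Y_u-Y_s|}{(u-s)^{2-\alpha}}\,du\Bigr)^p,
\]
where the power $p\ge 1$ sits \emph{outside} a $du$-integral, so one cannot simply exchange $\E$ and the power. Here I would apply Minkowski's integral inequality (the continuous form of the triangle inequality in $L_p(\Omega)$), which yields
\[
\Bigl(\E\Bigl(\int_s^t\frac{|Y_u-Y_s|}{(u-s)^{2-\alpha}}\,du\Bigr)^p\Bigr)^{1/p}
\le \int_s^t\frac{\bigl(\E|Y_u-Y_s|^p\bigr)^{1/p}}{(u-s)^{2-\alpha}}\,du.
\]
After raising to the $p$-th power and again splitting the increment, the weight becomes $(u-s)^{(\alpha-2)p}=(u-s)^{p\alpha-2p}$, producing exactly the integrands of (3)–(4). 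Finally I would collect the four resulting finite bounds — each guaranteed finite by the corresponding Assumption $(D_p)$ — and conclude that $\int_0^t\E|(\D^{1-\alpha}_{t-}Y_{t-})(s)|^p\,ds<\infty$, so $Y\in\E\D^-_p(\alpha,T)\subset\D^-_p(\alpha,T)$. Membership in $\D^-_p(\alpha,T)$ then makes $Y$ an appropriate $(p,\alpha)$-integrator for any $X\in\D^+_q(\alpha,T)$ with $q=p/(p-1)$, via Lemma~\ref{pr:integr}(iii).
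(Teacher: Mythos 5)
Your overall strategy coincides with the paper's: write the Weyl derivative as a boundary term plus an integral term, decompose the increment as $Y_u-Y_s=\int_s^u g(u,v)\,dZ_v+\int_0^s\bigl(g(u,v)-g(s,v)\bigr)\,dZ_v$, control the $p$-th moments of both pieces by the a priori estimate of Theorem~\ref{l:apriori-p}$(i)$, and match the boundary part with conditions (1)--(2) of $(D_p)$ and the integral part with (3)--(4). The boundary term and the final appeal to Lemma~\ref{pr:integr} are handled correctly.

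There is, however, a gap in your treatment of the integral term. Minkowski's integral inequality gives
\[
\E\left(\int_s^t\frac{\abs{Y_u-Y_s}}{(u-s)^{2-\alpha}}\,du\right)^p
\le\left(\int_s^t\frac{\bigl(\E\abs{Y_u-Y_s}^p\bigr)^{1/p}}{(u-s)^{2-\alpha}}\,du\right)^p,
\]
but ``raising to the $p$-th power'' does not then convert the weight $(u-s)^{\alpha-2}$ into $(u-s)^{p\alpha-2p}$: the $p$-th power of an integral is not the integral of the $p$-th powers, so the right-hand side is not yet an expression controlled by $(D_p)$(3)--(4). The missing step is one application of H\"older's (equivalently, Jensen's) inequality with exponent $p$,
\[
\left(\int_s^t h(u)\,du\right)^p\le (t-s)^{p-1}\int_s^t h(u)^p\,du,
\]
applied with $h(u)=(u-s)^{\alpha-2}\bigl(\E\abs{Y_u-Y_s}^p\bigr)^{1/p}$; this produces the weight $(u-s)^{(\alpha-2)p}=(u-s)^{p\alpha-2p}$ at the cost of the harmless factor $(t-s)^{p-1}\le T^{p-1}$, after which splitting $\E\abs{Y_u-Y_s}^p$ via the a priori bound and integrating in $s$ gives exactly (3)--(4). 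Note also that once this Jensen step is available, Minkowski becomes superfluous: you can apply Jensen pathwise to $\left(\int_s^t\frac{\abs{Y_u-Y_s}}{(u-s)^{2-\alpha}}\,du\right)^p$ and then take expectations by Tonelli, which is precisely the (implicit) reduction in the paper's proof, where it is asserted that it suffices to check $\int_0^t\int_s^t(u-s)^{p\alpha-2p}\,\E\abs{Y_u-Y_s}^p\,du\,ds<\infty$. With that one line inserted, your argument closes and is essentially the paper's.
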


\begin{proof}
Note that the increment of $Y$ are given by
\begin{equation}\begin{gathered}\label{eq:increm}
Y_t-Y_s=\int_0^tg(t,u)dZ_u-\int_0^sg(s,u)dZ_u\\
=\int_s^tg(t,u)dZ_u+\int_0^s(g(t,u)-g(s,u))dZ_u\quad (s\leq t).
\end{gathered}\end{equation}
Taking the definitions of fractional derivative and of the class $\E\D^-_p(\alpha,T)$ into account, it is sufficient to prove that
\[
\int_0^t\frac{\E\abs{Y_t-Y_s}^p}{(t-s)^{p-\alpha p}}\,ds<\infty
\quad\text{and}\quad
\int_0^t\int_s^t\frac{\E\abs{Y_u-Y_s}^p}{(u-s)^{2p-\alpha p}}\,du\,ds<\infty,
\; t\in[0,T].
\]
According to~\eqref{eq:increm} and \eqref{estimator3},
\begin{align*}
\E\abs{Y_t-Y_s}^p
&\le\E\abs{\int_s^tg(t,u)\,dZ_u}^p+\E\abs{\int_0^s(g(t,u)-g(s,u))\,dZ_u}^p\\
&\le C\!\int_\R |x|^p\pi(dx)\left(\int_s^t\abs{g(t,u)}^pdu
+\!\int_0^s\abs{g(t,u)-g(s,u)}^pdu\right)\\
&\le C\left(\int_s^t\abs{g(t,u)}^p\,du
+\int_0^s\abs{g(t,u)-g(s,u)}^p\,du\right).
\end{align*}
The proof immediately follows.
\end{proof}

Now consider separately the case  $p=2$ because in this case the martingale structure of the process $Y$ plays a crucial role and it is the most simple case for calculations and estimations.

\subsection{The case  $p=2$}
Taking Remark \ref{rem-mart} and Remark \ref{rem kern} into account, we consider a square-integrable c\`{a}dl\`{a}g  martingale $M=\{M_t, \mathcal{F}_t, t\geq 0\}$ with a quadratic  characteristics   $\langle M\rangle$ that is a c\`{a}dl\`{a}g non-decreasing process. Define also a c\`{a}dl\`{a}g non-decreasing measurable function $E_t=\E\langle M\rangle_t.$ The   integral  $\int_0^t g(t,s)\,dM_s$ for any $t>0$ is defined as a stochastic integral with respect to a square-integrable martingale, or, more exactly, since the kernel $g$ is non-random, as a Wiener integral with non-random integrand and a square-integrable martingale as an integrator.
A sufficient condition for its existence is
\[
\E\int_0^tg^2(t,s)\,d\langle M\rangle_s<\infty, \quad t\ge0,
\]
or, equivalently,
\begin{equation}\label{ass1}
 \int_0^tg^2(t,s)\,dE_s<\infty, \quad t\ge0.
\end{equation}
Under assumption  \eqref{ass1}, define the random process
\begin{equation}\label{repr:Y}
Y_t:=\int_0^t g(t,s)\,dM_s,
\quad t\ge0.
\end{equation}

\begin{theorem}\label{pr:integr-Y}
Let $p =2$, $\alpha\in(0,1)$.
Assume that for any $t\in [0,T]$, the following conditions hold.\\
{\bf Assumptions $(D_2)$}
\begin{enumerate}
\item
$\displaystyle
 \int_0^t(t-s)^{2\alpha-2}\int_s^tg^2(t,u)\,dE_u\,ds<\infty$,
\item
$\displaystyle
\int_0^t(t-s)^{2\alpha-2}\int_0^s(g(t,u)-g(s,u))^2\,dE_u\,ds<\infty$,
\item
$\displaystyle
\int_0^t\int_s^t\left(\int_v^t\frac{g(u,v)}{(u-s)^{2-\alpha}}\,du\right)^2 dE_v\,ds<\infty$,
\item
$\displaystyle
 \int_0^t\int_0^s\left(\int_s^t\frac{g(u,v)-g(s,v)}{(u-s)^{2-\alpha}}\,du\right)^2 dE_v\,ds<\infty$.
\end{enumerate}
Then $Y\in \E\D^-_2(\alpha,T),$ so it is an appropriate $(2,\alpha)$-integrator for any $f\in\D^+_2(\alpha, T).$
\end{theorem}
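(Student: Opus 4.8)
The plan is to verify directly that $Y\in\E\D^-_2(\alpha,T)$, that is, that $\int_0^t\E\abs{(\D^{1-\alpha}_{t-}Y_{t-})(s)}^2\,ds<\infty$ for every $t\in[0,T]$; the appropriate-integrator conclusion then follows at once from the definitions of the classes in Section~\ref{sec:integration}. First I would substitute the Weyl representation of the right-sided fractional derivative. Since $Y_{t-}(s)=Y_{t-}-Y_s$ on $(0,t)$, the difference $Y_{t-}(s)-Y_{t-}(y)$ collapses to the increment $Y_y-Y_s$, so that
\[
(\D^{1-\alpha}_{t-}Y_{t-})(s)=\frac{(-1)^{1-\alpha}}{\Gamma(\alpha)}\left(\frac{Y_{t-}-Y_s}{(t-s)^{1-\alpha}}+(1-\alpha)\int_s^t\frac{Y_u-Y_s}{(u-s)^{2-\alpha}}\,du\right).
\]
The unimodular prefactor disappears under $\abs{\cdot}$, and an application of $\abs{a+b}^2\le2\abs{a}^2+2\abs{b}^2$ reduces the task to bounding a \emph{boundary term} and an \emph{integral term} separately.

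For the boundary term $\int_0^t(t-s)^{2\alpha-2}\,\E\abs{Y_{t-}-Y_s}^2\,ds$ I would invoke the increment decomposition~\eqref{eq:increm}, the same elementary inequality once more, and the It\^o isometry for a deterministic integrand against the square-integrable martingale $M$, namely $\E\abs{\int f\,dM}^2=\int f^2\,dE$ (valid by Tonelli, since $g$ is non-random and $dE$ is non-negative). This yields exactly the two integrals in Assumptions $(D_2)(1)$ and $(D_2)(2)$, which are finite by hypothesis.

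For the integral term I would first apply~\eqref{eq:increm} inside the $du$-integral to write $Y_u-Y_s=\int_s^u g(u,v)\,dM_v+\int_0^s(g(u,v)-g(s,v))\,dM_v$, and split the two contributions by $\abs{a+b}^2\le2\abs{a}^2+2\abs{b}^2$. The decisive step is then a \emph{stochastic Fubini} interchange of the deterministic $du$-integration with the stochastic $dM_v$-integration. For the first contribution the region $\{s<v<u<t\}$ rewrites as $\{s<v<t,\ v<u<t\}$, producing the single stochastic integral $\int_s^t\bigl(\int_v^t g(u,v)(u-s)^{\alpha-2}\,du\bigr)\,dM_v$; for the second, $v$ runs over $(0,s)$ independently of $u$, producing $\int_0^s\bigl(\int_s^t(g(u,v)-g(s,v))(u-s)^{\alpha-2}\,du\bigr)\,dM_v$. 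Applying the It\^o isometry once more and integrating in $s$ reproduces precisely Assumptions $(D_2)(3)$ and $(D_2)(4)$.

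The main obstacle I anticipate is the rigorous justification of the stochastic Fubini interchange: one must check the joint measurability of the integrands and the finiteness of the relevant $L^2(\Omega\times[0,t],\,dE_v)$-type norms. This finiteness is, however, precisely what conditions $(D_2)(3)$--$(D_2)(4)$ guarantee, so the hypotheses of the theorem and of the Fubini theorem coincide and the argument is self-consistent. A secondary point to address is the convergence of the fractional-derivative integrals (pointwise, resp.\ in $L^2$), ensured by the Weyl representation recalled in Section~\ref{sec:integration}, together with the minor identification of the boundary increment $Y_{t-}-Y_s$ with the martingale increment treated in~\eqref{eq:increm}.
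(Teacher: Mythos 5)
Your proposal is correct and follows essentially the same route as the paper's own proof: the Weyl representation of $\D^{1-\alpha}_{t-}Y_{t-}$ combined with $\abs{a+b}^2\le2\abs{a}^2+2\abs{b}^2$, the increment decomposition \eqref{eq:increm} with the It\^o isometry to produce conditions $(D_2)(1)$--$(2)$ from the boundary term, and the stochastic Fubini interchange of $du$ with $dM_v$ followed by the isometry to produce conditions $(D_2)(3)$--$(4)$ from the integral term. The only difference is one of emphasis: the paper carries out the Fubini interchange without comment, while you correctly flag its justification (via the very finiteness conditions $(D_2)(3)$--$(4)$) as the point requiring care.
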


\begin{proof}
By the definition of the fractional derivative,
\begin{multline}\label{eq:int_DY}
\E\int_0^t\left(\left(\D^{1-\alpha}_{t-}Y_{t-}\right)(s)\right)^2ds\\
\le\frac{2}{\Gamma^2(\alpha)}
\Biggl(\E\int_0^t\frac{(Y_t-Y_s)^2}{(t-s)^{2-2\alpha}}\,ds
+(1-\alpha)^2\E\int_0^t\left(\int_s^t\frac{Y_u-Y_s}{(u-s)^{2-\alpha}}\,du\right)^2ds\Biggr).
\end{multline}
Now our goal is to bound from above each of the two terms in the right-hand side of~\eqref{eq:int_DY}.
Similarly to the proof of the previous theorem, the increments of $Y$ are
\begin{align*}
Y_t-Y_s&=\int_0^t g(t,u)\,dM_u-\int_0^s g(s,u)\,dM_u\\
&=\int_s^t g(t,u)\,dM_u+\int_0^s(g(t,u)-g(s,u))\,dM_u \quad (s \leq t).
\end{align*}
Therefore, for the first term we have the following upper bound
\begin{equation}\label{eq:1st_term}
\begin{split}
&\E\int_0^t\frac{(Y_t-Y_s)^2}{(t-s)^{2-2\alpha}}\,ds\\
&\le2\E\int_0^t(t-s)^{2\alpha-2}
\left(\left(\int_s^t g(t,u)\,dM_u\right)^2\right.\\
&\qquad+\left.\left(\int_0^s(g(t,u)-g(s,u))\,dM_u\right)^2\right)ds\\
&=2\int_0^t(t-s)^{2\alpha-2}
\left(\E\int_s^t g^2(t,u)\,d\langle M\rangle_u
\right.\\
&\qquad+\left.\E\int_0^s(g(t,u)-g(s,u))^2\,d\langle M\rangle_u\right)ds\\&=2\int_0^t(t-s)^{2\alpha-2}
\left(\int_s^t g^2(t,u)\,dE_u
+\int_0^s(g(t,u)-g(s,u))^2\,dE_u\right)ds.
\end{split}
\end{equation}
Hence, the first expectation in~\eqref{eq:int_DY} is finite.
The second summand in the right-hand side of ~\eqref{eq:int_DY} can be bounded as follows:
\begin{align*}
\E&\int_0^t\left(\int_s^t\frac{Y_u-Y_s}{(u-s)^{2-\alpha}}\,du\right)^2ds\\
&=\E\int_0^t\left(\int_s^t(u-s)^{\alpha-2}
\left(\int_s^ug(u,v)\,dM_v\right.\right.\\
&\qquad+\left.\left.\int_0^s(g(u,v)-g(s,v))\,dM_v\right)\,du\right)^2ds\\
&\le2\E\int_0^t\left(\left(\int_s^t\int_s^u\frac{g(u,v)}{(u-s)^{2-\alpha}}\,dM_v\,du\right)^2\right.\\
&\qquad+\left.\left(\int_s^t\int_0^s\frac{g(u,v)-g(s,v)}{(u-s)^{2-\alpha}}\,dM_v\,du\right)^2\right)ds\\
&=2\int_0^t\left(\E\left(\int_s^t\int_v^t\frac{g(u,v)}{(u-s)^{2-\alpha}}\,du\,dM_v\right)^2\right.\\
&\qquad+\left.\E\left(\int_0^s\int_s^t\frac{g(u,v)-g(s,v)}{(u-s)^{2-\alpha}}\,du\,dM_v\right)^2\right)ds\\
&=2\int_0^t\left(\E\int_s^t\left(\int_v^t\frac{g(u,v)}{(u-s)^{2-\alpha}}\,du\right)^2d\langle M\rangle_v\right.\\
&\qquad+\left.\E\int_0^s\left(\int_s^t\frac{g(u,v)-g(s,v)}{(u-s)^{2-\alpha}}\,du\right)^2d\langle M\rangle_v\right)ds\\
&=2\int_0^t\left(\int_s^t\left(\int_v^t\frac{g(u,v)}{(u-s)^{2-\alpha}}\,du\right)^2dE_v\right.\\
&\qquad+\left.\int_0^s\left(\int_s^t\frac{g(u,v)-g(s,v)}{(u-s)^{2-\alpha}}\,du\right)^2dE_v\right)ds
<\infty.
\end{align*}
\end{proof}

\vspace{3mm}
\subsection{The case  $2<p<\infty$} Taking inequality \eqref{ineq-p} into account, we can formulate corresponding  result similarly to Theorem \ref{lem7}.
Since the proof follows the same steps, it is omitted.

\begin{theorem}\label{lem8} Let  $ p\in(2,\infty), \alpha\in(0,1)$,    $g=g(t,\cdot)\in L_p([0,t])$ for any $t\in[0,T]$,
$a=0$, the measure $\pi$ is symmetric with
$\int_\R\abs{x}^p\,\pi(dx)<\infty$,
  and let additionally condition $(D_p)$ hold.
Then
$$Y=\set{Y_t=\int_0^tg(t,s)\,dZ_s, t\in[0,T]}\in \E\D^-_p(\alpha,T),$$
so, it is an appropriate $(p,\alpha)$-integrator for any $f\in\D^+_q(\alpha, T)$.
\end{theorem}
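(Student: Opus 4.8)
The plan is to transcribe the proof of Theorem~\ref{lem7} essentially verbatim, the only substantive change being that the a priori bound \eqref{estimator2} (valid for $p\ge2$) must be invoked in place of \eqref{estimator3}. The observation that makes this substitution harmless is the hypothesis $a=0$: under it the term $a^{p/2}\norm{g(t,\cdot)}_{L_2([s,u])}^p$ in \eqref{estimator2} vanishes, so that \eqref{estimator2} collapses to exactly the same shape as \eqref{estimator3}, namely $\E\abs{\int f\,dZ}^p\le C\norm{f}_{L_p}^p\int_\R\abs{x}^p\,\pi(dx)$. Consequently the entire bookkeeping of Theorem~\ref{lem7}, together with Assumptions $(D_p)$, carries over without modification, which is why the argument may be omitted.

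First I would reduce the claim $Y\in\E\D^-_p(\alpha,T)$, via the Weyl representation of $\D^{1-\alpha}_{t-}Y_{t-}$ (noting $Y_{t-}(s)-Y_{t-}(u)=Y_u-Y_s$) together with $(x+y)^p\le2^{p-1}(x^p+y^p)$, to the finiteness for every $t\in[0,T]$ of the two quantities
\[
\int_0^t\frac{\E\abs{Y_t-Y_s}^p}{(t-s)^{p-\alpha p}}\,ds
\quad\text{and}\quad
\int_0^t\E\left(\int_s^t\frac{\abs{Y_u-Y_s}}{(u-s)^{2-\alpha}}\,du\right)^p ds.
\]
The first is already in the desired form. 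For the second I would apply Hölder's inequality in the variable $u$, pairing the whole integrand $\abs{Y_u-Y_s}(u-s)^{-(2-\alpha)}$ with the constant $1$; since $p>1$ this yields
\[
\left(\int_s^t\frac{\abs{Y_u-Y_s}}{(u-s)^{2-\alpha}}\,du\right)^p
\le (t-s)^{p-1}\int_s^t\frac{\abs{Y_u-Y_s}^p}{(u-s)^{(2-\alpha)p}}\,du,
\]
whence, bounding $(t-s)^{p-1}\le T^{p-1}$ and using Fubini, the second quantity is controlled by $\int_0^t\int_s^t(u-s)^{\alpha p-2p}\,\E\abs{Y_u-Y_s}^p\,du\,ds$.

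It then remains to insert the increment decomposition \eqref{eq:increm}, together with the a priori bound \eqref{estimator2} in its $a=0$ form, to obtain
\[
\E\abs{Y_u-Y_s}^p\le C\left(\int_s^u\abs{g(u,v)}^p\,dv+\int_0^s\abs{g(u,v)-g(s,v)}^p\,dv\right),
\]
and likewise for $\E\abs{Y_t-Y_s}^p$. Substituting these into the two controlling integrals produces precisely the four expressions of Assumptions $(D_p)$: items~(1)--(2) dominate the first quantity and items~(3)--(4) dominate the second. The only genuinely non-routine step is the passage from the $p$-th power of the inner $u$-integral to an integral of $p$-th powers; this is exactly where $p>1$ is used, and it is the same Hölder manoeuvre underlying Theorem~\ref{lem7}. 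Everything else is a direct transcription.
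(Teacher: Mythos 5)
Your proposal is correct and is essentially the paper's own (omitted) argument: the paper states only that the proof repeats Theorem~\ref{lem7} with the $p\ge2$ a priori bound in place of \eqref{estimator3}, which is precisely your substitution, and your Weyl-representation reduction plus the H\"older step in $u$ (giving the factor $(t-s)^{p-1}$ and the exponent $2p-\alpha p$) are exactly the details left implicit in the proof of Theorem~\ref{lem7}. The only cosmetic difference is that the paper cites the simplified bound \eqref{ineq-p}, which absorbs the $a$- and $b$-terms via $\norm{f}_{L_2([0,t])}\le t^{1/2-1/p}\norm{f}_{L_p([0,t])}$, whereas you invoke \eqref{estimator2} and use $a=0$ to collapse it to the shape of \eqref{estimator3}; both routes yield the same increment estimate, so the arguments coincide.
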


\subsection{The case  $p=\infty$} As in the case $p=2$, consider  a square-integrable    martingale $M=\{M_t, \mathcal{F}_t, t\geq 0\}$ with a  quadratic  characteristics   $\langle M\rangle$. In order to give the conditions for $Y\in\E \D^-_\infty(\alpha,T)$, a H\"older continuity of rather high order is required.
Therefore, we assume that $M$ and consequently  $\langle M\rangle$ are continuous processes, and that  $E_t=\E\langle M\rangle_t $ is a continuous function.
Remark immediately that this is not the case for subordinate Wiener process.

\begin{theorem}\label{pr:integr-Y-3}
Let $\alpha\in(0,1)$, $M$ is a square-integrable continuous martingale with quadratic characteristics $\langle M\rangle_t=\int_0^t m_sds $, where $\E m_s\leq C$.
Assume that, for some $\varrho\ge1$, $\beta>\frac1\varrho+1-\alpha$, the following condition holds:\\
{\bf Assumptions $(D_\infty)$}
\begin{enumerate}
\item
$\displaystyle
\int_0^T\!\!\int_0^T\abs{y-x}^{-\beta \varrho-1}
\left(\int_x^yg^2(y,u)\,du\right)^{\varrho/2}dx\,dy<\infty
$
\item
$\displaystyle
\int_0^T\!\!\int_0^T\abs{y-x}^{-\beta \varrho-1}
\left(\int_0^x(g(y,u)-g(x,u))^2\,du\right)^{\varrho/2}dx\,dy<\infty.
$
\end{enumerate}
Then $Y\in\E \D^-_\infty(\alpha,T),$ so it is an appropriate $(\infty,\alpha)$-integrator for any $f\in\D^+_0(\alpha, T).$
\end{theorem}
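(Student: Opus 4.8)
The plan is to show $Y\in\E\D^-_\infty(\alpha,T)$, which by definition requires $\E\sup_{0\le s\le t}\abs{(\D^{1-\alpha}_{t-}Y_{t-})(s)}<\infty$ for each $t\in[0,T]$. The natural route is to first establish that the process $Y$ admits a version that is almost surely H\"older continuous of some order $\gamma>1-\alpha$, and then to invoke the fact that the right-sided fractional derivative $\D^{1-\alpha}_{t-}$ of a function of H\"older order $\gamma>1-\alpha$ is bounded, with a bound controlled by the H\"older seminorm. Concretely, from the Weyl representation one reads off
\[
\abs{(\D^{1-\alpha}_{t-}Y_{t-})(s)}
\le\frac{1}{\Gamma(\alpha)}\left(\frac{\abs{Y_t-Y_s}}{(t-s)^{1-\alpha}}
+(1-\alpha)\int_s^t\frac{\abs{Y_u-Y_s}}{(u-s)^{2-\alpha}}\,du\right),
\]
so the supremum over $s$ is finite whenever $Y$ is H\"older of order exceeding $1-\alpha$, and bounding its expectation reduces to controlling a moment of the H\"older seminorm of $Y$.

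The key step is therefore a Kolmogorov-type continuity estimate. First I would use the increment decomposition exactly as in the proofs of Theorem~\ref{lem7} and Theorem~\ref{pr:integr-Y},
\[
Y_t-Y_s=\int_s^t g(t,u)\,dM_u+\int_0^s(g(t,u)-g(s,u))\,dM_u,
\]
and then apply the Burkholder--Davis--Gundy inequality from Remark~\ref{rem-mart} together with $\langle M\rangle_t=\int_0^t m_s\,ds$ and $\E m_s\le C$. For the exponent $\varrho$ appearing in the hypotheses this yields, for each fixed pair $x<y$,
\[
\E\abs{Y_y-Y_x}^\varrho
\le C\left(\E\left(\int_x^y g^2(y,u)\,du\right)^{\varrho/2}
+\E\left(\int_0^x(g(y,u)-g(x,u))^2\,du\right)^{\varrho/2}\right),
\]
where the right-hand side is now deterministic after using $\E m_s\le C$ and is exactly the integrand appearing, against the weight $\abs{y-x}^{-\beta\varrho-1}$, in Assumptions~$(D_\infty)$. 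The plan is then to integrate this bound against $\abs{y-x}^{-\beta\varrho-1}\,dx\,dy$ over $[0,T]^2$; conditions~(1) and~(2) guarantee the resulting double integral is finite, which is precisely the hypothesis of the Garsia--Rodemich--Rumsey inequality.

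Next I would appeal to the Garsia--Rodemich--Rumsey lemma (in its $\Psi(u)=u^\varrho$, $p(u)=u^\beta$ form): the finiteness of
\[
\int_0^T\!\!\int_0^T\frac{\E\abs{Y_y-Y_x}^\varrho}{\abs{y-x}^{\beta\varrho+1}}\,dx\,dy<\infty
\]
implies that $Y$ has a modification whose H\"older seminorm of order $\gamma=\beta-\tfrac1\varrho$ has finite expectation. The standing assumption $\beta>\tfrac1\varrho+1-\alpha$ guarantees $\gamma>1-\alpha$, so the fractional derivative bound above applies and, taking expectations and integrating in $s$, gives $\E\sup_{0\le s\le t}\abs{(\D^{1-\alpha}_{t-}Y_{t-})(s)}<\infty$ for every $t\in[0,T]$; this is exactly $Y\in\E\D^-_\infty(\alpha,T)$, and the appropriate integrator conclusion follows from the classification in Section~\ref{sec:integration}.

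The main obstacle I anticipate is the passage through Garsia--Rodemich--Rumsey with the correct bookkeeping of exponents: one must verify that the weight $\abs{y-x}^{-\beta\varrho-1}$ in $(D_\infty)$ matches the $p(u)=u^\beta$ form of the lemma so that the output H\"older exponent is genuinely $\beta-\tfrac1\varrho$, and then confirm that the continuity of $M$, $\langle M\rangle$ and $E_t=\E\langle M\rangle_t$ (assumed precisely to allow high-order H\"older continuity, as the preamble to the theorem notes) is what makes the seminorm finite rather than merely the increment moments at fixed points. The moment passage via BDG is routine, but ensuring that the deterministic bound obtained after using $\E m_s\le C$ is exactly what $(D_\infty)$ controls, and that $\beta>\tfrac1\varrho+1-\alpha$ delivers a H\"older order strictly above $1-\alpha$ so that $\D^{1-\alpha}_{t-}Y_{t-}$ is bounded, is the delicate part of the argument.
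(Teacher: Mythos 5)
Your proposal is correct and follows essentially the same route as the paper's own proof: the same increment decomposition, Burkholder/BDG moment bound reducing to the integrals in $(D_\infty)$, the Garsia--Rodemich--Rumsey lemma to obtain a pathwise H\"older bound of order $\beta-\tfrac1\varrho>1-\alpha$ with a random seminorm of finite expectation, and the Weyl-representation estimate for $\D^{1-\alpha}_{t-}Y_{t-}$ to conclude $Y\in\E\D^-_\infty(\alpha,T)$. The only (minor) difference is bookkeeping: the paper applies GRR pathwise to the continuous process and controls $\E\xi^\varrho$ by Fubini, whereas you phrase it as extracting a H\"older-continuous modification, which amounts to the same argument.
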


\begin{proof}
The proof follows the scheme from~\cite[Lemma~7.5]{NR}.
According to the Garsia-Rodemich-Rumsey inequality from \cite{GRR}, for any continuous function $f\colon[0,T]\to\R$ and any $\varrho\ge1$, $\beta>\frac1\varrho$ and $0\le s\le t\le T$, we have
\[
\abs{f(t)-f(s)}^\varrho\le C_{\beta,\varrho}\abs{t-s}^{\beta \varrho-1}
\int_0^T\!\!\int_0^T\frac{\abs{f(x)-f(y)}^\varrho}{\abs{x-y}^{\beta \varrho+1}}\,dx\,dy.
\]
Consider the increment of $Y$:
\[
\abs{Y_t-Y_s}^\varrho\le C\abs{t-s}^{\beta \varrho-1}
\int_0^T\!\!\int_0^T\frac{\abs{Y_x-Y_y}^\varrho}{\abs{x-y}^{\beta \varrho+1}}\,dx\,dy \quad (s \leq t).
\]
Taking the continuity of $Y$ into account, we can apply Burkholder's inequality, and get
\begin{equation}\label{eq:3}
\begin{split}
\E\abs{Y_x-Y_y}^\varrho&\le C\E\abs{\int_x^yg(y,u)\,dM_u}^\varrho
+ C\E\abs{\int_0^x(g(y,u)-g(x,u))\,dM_u}^\varrho\\
&\le C\left(\int_x^yg^2(y,u)\,du\right)^{\varrho/2}
+ C\left(\int_0^x(g(y,u)-g(x,u))^2\,du\right)^{\varrho/2}.
\end{split}
\end{equation}
Then
\begin{equation}\label{eq:4}
\abs{Y_t-Y_s}\le C\abs{t-s}^{\beta-1/\varrho}\xi,
\end{equation}
where
\[
\xi=\left(\int_0^T\!\!\int_0^T\frac{\abs{Y_x-Y_y}^\varrho}{\abs{x-y}^{\beta \varrho+1}}\,dx\,dy\right)^{1/\varrho}.
\]
By the upper bound ~\eqref{eq:3} and condition $(D_\infty)$,
$\E\abs{\xi}^\varrho<\infty$.
By the definition of the fractional derivative,
\[
\abs{\D^{1-\alpha}_{t-}Y_{t-}(s)}\le\frac{1}{\Gamma(\alpha)}\left(\frac{\abs{Y_t-Y_s}}{(t-s)^{1-\alpha}}+
(1-\alpha)\int_s^t\frac{\abs{Y_s-Y_v}}{(v-s)^{2-\alpha}}\,dv\right).
\]
Combining this with~\eqref{eq:4}, and using the inequality $\beta>\frac1\varrho+1-\alpha$, we complete the proof.
\end{proof}

\begin{corollary}\label{cor:1}
Let the function $g$ satisfy the following conditions:
\begin{enumerate}
\item
$\abs{g(t,s)}\le C$, $t,s\in[0,T]$;
\item
$\abs{g(t,s)-g(v,s)}\le C\abs{t-v}^{1/2}$, $t,s,v\in[0,T]$.
\end{enumerate}
Then the assumptions of Theorem~\ref{pr:integr-Y-3} are satisfied for any $\alpha>1/2$, if we choose $\varrho\ge\frac{2}{2\alpha-1}$ and $\frac1\varrho+1-\alpha<\beta<\frac12$.
\end{corollary}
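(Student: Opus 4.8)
The goal is to verify that Corollary~\ref{cor:1} follows from Theorem~\ref{pr:integr-Y-3} by checking that conditions (1) and (2) on $g$ imply Assumptions $(D_\infty)$. The plan is to substitute the Hölder-type bounds on $g$ directly into the two integrals appearing in $(D_\infty)$ and verify their finiteness, then confirm that the required parameter constraints $\varrho\ge1$ and $\beta>\frac1\varrho+1-\alpha$ can be simultaneously satisfied by the proposed choices. First I would treat the inner integrals: using condition (1), $\int_x^y g^2(y,u)\,du\le C^2\abs{y-x}$, so the integrand of the first condition in $(D_\infty)$ is bounded by $C\abs{y-x}^{-\beta\varrho-1+\varrho/2}$. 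For the second condition, condition (2) gives $\int_0^x (g(y,u)-g(x,u))^2\,du\le C^2 x\abs{y-x}\le C^2 T\abs{y-x}$, so the integrand is again controlled by $C\abs{y-x}^{-\beta\varrho-1+\varrho/2}$.

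Next I would verify integrability of $\abs{y-x}^{-\beta\varrho-1+\varrho/2}$ over the square $[0,T]^2$. The double integral $\int_0^T\!\!\int_0^T\abs{y-x}^{-\gamma}\,dx\,dy$ converges precisely when $\gamma<1$, so I need $\beta\varrho+1-\varrho/2<1$, i.e. $\beta\varrho<\varrho/2$, which is equivalent to $\beta<\frac12$. This is exactly the upper constraint imposed on $\beta$ in the statement. With $\beta<\frac12$ both integrals in $(D_\infty)$ are finite, so Assumptions $(D_\infty)$ hold.

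It then remains to check that the parameter ranges are compatible, namely that one can pick $\varrho$ and $\beta$ satisfying all constraints simultaneously when $\alpha>\frac12$. The theorem requires $\beta>\frac1\varrho+1-\alpha$, and my integrability computation requires $\beta<\frac12$; a valid $\beta$ exists iff $\frac1\varrho+1-\alpha<\frac12$, i.e. $\frac1\varrho<\alpha-\frac12$, i.e. $\varrho>\frac{2}{2\alpha-1}$. Since $\alpha>\frac12$ makes the right-hand side finite and positive, the choice $\varrho\ge\frac{2}{2\alpha-1}$ (together with $\varrho\ge1$, which holds automatically as $\frac{2}{2\alpha-1}>1$ for $\alpha<1$) opens up a nonempty interval $\frac1\varrho+1-\alpha<\beta<\frac12$ for $\beta$. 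Choosing any such $\beta$ then meets all hypotheses of Theorem~\ref{pr:integr-Y-3}, yielding $Y\in\E\D^-_\infty(\alpha,T)$.

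I do not anticipate a serious obstacle here, since the argument is a routine substitution followed by a one-dimensional power-integrability criterion. The only point requiring mild care is bookkeeping of the parameter inequalities — ensuring that the lower bound $\frac1\varrho+1-\alpha$ on $\beta$ stays strictly below $\frac12$ under the stated choice of $\varrho$, and noting that $\alpha>\frac12$ is exactly the condition that makes the feasible interval nonempty. The factor $T$ arising from the crude bound $x\le T$ in the second estimate is harmless since it is absorbed into the constant $C$.
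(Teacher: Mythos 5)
Your verification is correct and is exactly the routine check the paper relies on: the paper states this corollary without any written proof, and your argument — substituting the boundedness and H\"older bounds on $g$ into Assumptions $(D_\infty)$ to get integrands of order $\abs{y-x}^{-\beta\varrho-1+\varrho/2}$, invoking the criterion that $\int_0^T\int_0^T\abs{y-x}^{-\gamma}\,dx\,dy<\infty$ iff $\gamma<1$ (equivalently $\beta<\frac12$), and then checking compatibility $\frac1\varrho+1-\alpha<\frac12$ — is precisely the intended justification. One cosmetic remark: as your own ``iff'' shows, at the boundary $\varrho=\frac{2}{2\alpha-1}$ the interval for $\beta$ is empty, so strictly one needs $\varrho>\frac{2}{2\alpha-1}$; this slip is inherited from the corollary's statement itself and is harmless, since the hypothesis also requires choosing $\beta$ in that (then nonempty) interval.
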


\subsection{Conditions for $\int_\cdot = \int_0^\cdot g(\cdot,s) dW^L_s$ to be an appropriate $(p,\alpha)$-integrator}

As an example in line with Section 3, we formulate the results supplying the appropriate integrator properties of $Y_t=\int_0^tg(t,s)\,dW^L_s, t\in[0,T]$ for $Z=W^L$ a subordinated Wiener process.
Consider the fixed interval $[0,T]$, and let
 $g=g(t,s)\colon \{0\leq s \leq t \leq T\}  \rightarrow \R$
be a non-random measurable kernel.
\begin{theorem}\label{theor-Wiener-1} Let  $  p\in[1,\infty), \alpha \in(0,1)$,  $g=g(t,\cdot)\in L_p([0,t])$ for any $t\in[0,T]$, $a=0$ in the case when $1\leq p\leq 2$, and $\int_0^\infty s^{\frac{p}{2}}d\nu_s<\infty.$
Also, let   conditions $(D_p)$ hold.
Then
$$Y=\set{Y_t=\int_0^tg(t,s)\,dW^L_s, t\in[0,T]}\in \E\D^-_p(\alpha,T),$$
so, $Y$ is an appropriate $(p,\alpha)$-integrator for any $f\in\D^+_q(\alpha, T)$.
\end{theorem}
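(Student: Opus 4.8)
The plan is to recognise $W^L$ as a concrete L\'evy process and then to reduce the statement to the general Theorems~\ref{lem7} and~\ref{lem8}, which already treat an arbitrary L\'evy integrator $Z$. First I would recall from Lemma~\ref{theor1:1}$(ii)$ and the remark that follows it that $W^L$ is a L\'evy process whose characteristic triplet is $(a,0,\pi)$, with vanishing drift and symmetric L\'evy measure $\pi$; in particular, by symmetry the centring constant in the canonical representation with the truncation $\tau$ is $b=0$, since $\int_{\abs{x}>1}\tfrac{x}{\abs{x}}\,\pi(dx)=0$. Thus $W^L$ automatically satisfies the structural hypotheses ($b=0$ and $\pi$ symmetric) shared by Theorems~\ref{lem7} and~\ref{lem8}, while in the range $1\le p<2$ the additional requirement $a=0$ is imposed directly in the assumptions.

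Next I would check the single integrability condition on the L\'evy measure. Proceeding exactly as in the computation~\eqref{eq:pi_nu} of Section~\ref{sec:SWP}, one obtains
\[
\int_\R\abs{x}^p\,\pi(dx)\le C\int_0^\infty s^{p/2}\,\nu(ds)<\infty,
\]
the finiteness being precisely the standing hypothesis. This is exactly what is needed for the a priori moment bounds~\eqref{estimator3} (for $1\le p<2$) and~\eqref{estimator2} (for $p\ge2$) to apply to $Y_t=\int_0^tg(t,s)\,dW^L_s$, with the right-hand sides governed by $\int_0^\infty s^{p/2}\,\nu(ds)$; equivalently one may invoke the specialised estimates~\eqref{charact2:1a}--\eqref{charact2:1b}.

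With these two facts in hand, the conclusion follows by a case distinction on $p$ together with conditions $(D_p)$. For $p\in[1,2)$ every hypothesis of Theorem~\ref{lem7} is met, so that theorem yields $Y\in\E\D^-_p(\alpha,T)$ and hence the appropriate-integrator property. For $p\in(2,\infty)$ the same conclusion is delivered by Theorem~\ref{lem8}. The only case not literally covered by either statement is the endpoint $p=2$; here I would observe that the proof scheme of Theorem~\ref{lem8}, being built on the bound~\eqref{estimator2} which holds for all $p\ge2$, remains valid for $p=2$ verbatim, and simply run it with $p=2$.

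I expect the one point requiring genuine care to be this endpoint $p=2$. Since for $p=2$ the standing hypothesis reads $\int_0^\infty s\,\nu(ds)<\infty$, which is condition~$(D)$, Lemma~\ref{lem1} shows that $W^L$ is then a square-integrable martingale with $\langle W^L\rangle_t=ct$, so a second route is available through the sharper martingale argument of Theorem~\ref{pr:integr-Y}; one must, however, bear in mind that this route is phrased in terms of conditions $(D_2)$ rather than of $(D_p)$ specialised to $p=2$, so a small check is needed to confirm that the $(D_p)$ hypotheses guarantee finiteness of the fractional-derivative integrals through the coarser L\'evy bound~\eqref{estimator2}. Once the identification $b=0$ and the absorption of all constants into a generic $C$ are settled, the remaining steps are the routine substitutions already performed in the proofs of Theorems~\ref{lem7} and~\ref{lem8}.
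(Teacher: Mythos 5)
Your strategy coincides with the paper's own (implicit) proof: the paper offers no separate argument for this theorem, treating it precisely as the specialisation of Theorems~\ref{lem7} and~\ref{lem8} to $Z=W^L$, using Lemma~\ref{theor1:1} (zero drift, symmetric L\'evy measure $\pi$) together with the bound $\int_\R\abs{x}^p\,\pi(dx)\le C\int_0^\infty s^{p/2}\,\nu(ds)$. Your treatment of the endpoint $p=2$ --- running the $p\ge2$ a priori bound \eqref{estimator2} through the scheme of Theorem~\ref{lem7}, with $a=0$ in force there --- is correct, and is in fact more careful than the paper, which leaves $p=2$ strictly between the ranges of Theorems~\ref{lem7} and~\ref{lem8}.

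There is, however, one concrete misreading that leaves a gap in the range $p\in(2,\infty)$. You claim that the structural hypotheses shared by Theorems~\ref{lem7} and~\ref{lem8} are only $b=0$ and symmetry of $\pi$; in fact Theorem~\ref{lem8} as stated also assumes $a=0$. Since the present theorem imposes $a=0$ only for $1\le p\le 2$, the case $p>2$ with a subordinator of positive drift $a>0$ --- so that $W^L$ has a nontrivial Gaussian component --- is not covered by a literal application of Theorem~\ref{lem8}. The repair is short but must be said: use \eqref{charact2:1b} (or \eqref{estimator2}), whose right-hand side carries the additional term $a^{p/2}\norm{g(t,\cdot)}_{L_2}^p$, and note that by H\"older's inequality on a bounded interval
\[
\left(\int_s^t g^2(t,u)\,du\right)^{p/2}\le (t-s)^{\frac p2-1}\int_s^t\abs{g(t,u)}^p\,du
\le T^{\frac p2-1}\int_s^t\abs{g(t,u)}^p\,du,
\]
and similarly for the increments $g(t,u)-g(s,u)$ and for the inner integrals appearing in items (3)--(4) of $(D_p)$; hence the $L_2$-terms are dominated by exactly the quantities that conditions $(D_p)$ make finite, after which the proof of Theorem~\ref{lem7}/\ref{lem8} runs verbatim. (Equivalently, one may invoke inequality \eqref{ineq-p}, which for $p\ge2$ holds without assuming $a=0$.) You do point to \eqref{charact2:1a}--\eqref{charact2:1b} as an alternative route, but you never process the $a^{p/2}$-term they contain, and that is the only place in the whole reduction where anything beyond quoting earlier results is actually needed.
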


\subsection{Examples of appropriate $(p,\alpha)$-integrators}

\begin{example}\label{ex:1}
Assume that on interval $[0,T]$ two following properties hold:
\begin{enumerate}[(i)]
\item
$\E\langle M\rangle_t=\int_0^t\sigma^2(s)\,ds$,
where
$\abs{\sigma(s)}\le \sigma$, where $\sigma>0$ is some constant;
\item
$g(t,s)=g(j(\cdot), t,s)=c_Hs^{\frac12-H}\int_s^tu^{H-\frac12}(u-s)^{H-3/2}j(u)\,du$,
where
  $H\in\left(\frac12,1\right)$, $j$ is a measurable bounded function, $|j(u)|\leq G$, where $G>0$ is some constant.
\end{enumerate}
Then the process $Y$ from \eqref{repr:Y} is an appropriate $(2,\alpha)$-integrator for any $1-H<\alpha<1$, and in addition, has a.\,s.\ $\gamma$-H\"older  trajectories for any  $0<\gamma<H-1/2$.
\end{example}

\begin{proof}
From now on, we denote $C$ different constants whose value is not here important. Start  with H\"older continuity. According to Kolmogorov theorem, it is sufficient to prove that
\begin{equation}\label{eq:inc_var}
\E(Y_t-Y_s)^2\le C(t-s)^{2H}=C(t-s)^{1+2(H-1/2)}.
\end{equation}
To establish \eqref{eq:inc_var}, we note that, similarly to~\eqref{eq:1st_term}, for $s \leq t$,
\begin{align*}
\E(Y_t-Y_s)^2&\le2\left(\E\int_s^t g^2(j(\cdot), t,u)\,d\langle M\rangle_u\right.\\
&\quad\left.+\E\int_0^s(g(j(\cdot), t,u)-g(j(\cdot), s,u))^2\,d\langle M\rangle_u\right)\\
&=2\left(\E\int_s^t g^2(j(\cdot), t,u)\sigma^2(u)\,du\right.\\
&\quad\left.
+\E\int_0^s(g(j(\cdot), t,u)-g(j(\cdot), s,u))^2\sigma^2(u)\,du\right)\\
&\le2\sigma^2(I_1+I_2),
\end{align*}
where
\[
I_1=\int_s^t g^2(j(\cdot), t,u)\,du,\\
\qquad
I_2=\int_0^s(g(j(\cdot), t,u)-g(j(\cdot), s,u))^2\,du.
\]
Consider $I_1$.
\begin{align*}
I_1&=c_H^2\int_s^tu^{1-2H}\left(\int_u^tv^{H-\frac12}(v-u)^{H-\frac32}j(v)dv\right)^2du\\
&\leq c_H^2 G^2\int_s^t\int_u^t\int_u^tf(u,v,z)\,dz\,dv\,du,
\end{align*}
where
\[
f(u,v,z)= u^{1-2H}v^{H-\frac12}(v-u)^{H-\frac32}z^{H-\frac12}(z-u)^{H-\frac32}.
\]
Changing the order of integration and using the equality
$f(u,v,z)=f(u,z,v)$, we get
\begin{align*}
 &\int_s^t\int_s^v\int_s^zf(u,v,z)\,dz\,du\,dv\\
&=\int_s^t\int_s^v\int_s^zf(u,v,z)\,du\,dz\,dv
+\int_s^t\int_v^t\int_s^vf(u,v,z)\,du\,dz\,dv\\
&=\int_s^t\int_s^v\int_s^zf(u,v,z)\,du\,dz\,dv
+\int_s^t\int_s^z\int_s^vf(u,v,z)\,du\,dv\,dz\\
&=2\int_s^t\int_s^v\int_s^zf(u,v,z)\,du\,dz\,dv.
\end{align*}
Similarly $I_2$ can be rewritten  as follows
\begin{align*}
I_2&=c_H^2\int_0^su^{1-2H}\left(\int_s^tv^{H-\frac12}(v-u)^{H-\frac32}g(v)dv\right)^2du\\
&\leq c_H^2 G^2\int_0^s\int_s^t\int_s^tf(u,v,z)\,dz\,dv\,du\\
&=c_H^2 G^2\int_s^t\int_s^t\int_0^sf(u,v,z)\,du\,dz\,dv\\
&=2c_H^2 G^2\int_s^t\int_s^v\int_0^sf(u,v,z)\,du\,dz\,dv.
\end{align*}
Summarizing, we get that
\begin{align*}
I_1+I_2&\leq C\int_s^t\int_s^v\int_0^zf(u,v,z)\,du\,dz\,dv\\
&=C\int_s^t\int_s^vv^{H-\frac12}z^{H-\frac12}
\int_0^zu^{1-2H}(v-u)^{H-\frac32}(z-u)^{H-\frac32}\,du\,dz\,dv.
\end{align*}
Using~\cite[Lemma~2.2(i)]{NVV} we can calculate the inner integral:
\begin{equation}\label{eq:frac_integral}
\int_0^zu^{1-2H}(z-u)^{H-\frac32}
(v-u)^{H-\frac32}\,du
=Cv^{\frac12-H}z^{\frac12-H}(v-z)^{2H-2}.
\end{equation}
Then
\begin{equation}\label{eq:I1+I2}
\begin{split}
I_1+I_2
&\leq C\int_s^t\int_s^v(v-z)^{2H-2}\,dv =C(t-s)^{2H}.
\end{split}
\end{equation}
Thus, \eqref{eq:inc_var} is proved.
Now it remains to   check the conditions of Theorem~\ref{pr:integr-Y}.
Since
$d\E\langle M\rangle_s=\sigma^2(s)\,ds$ and
$\abs{\sigma(s)}\le C$, it is very easy to understand that
we need to show   the existence of the following four integrals:
\begin{align*}
J_1&=\int_0^t(t-s)^{2\alpha-2}\int_s^tg^2(1, t,u)\,du\,ds,\\
J_2&=\int_0^t(t-s)^{2\alpha-2}\int_0^s(g(1, t,u)-g(1, s,u))^2\,du\,ds,\\
J_3&=\int_0^t\int_s^t\left(\int_v^t\frac{g(1, u,v)}{(u-s)^{2-\alpha}}\,du\right)^2 dv\,ds,\\
J_4&=\int_0^t\int_0^s\left(\int_s^t\frac{g(1, u,v)-g(1,s,v)}{(u-s)^{2-\alpha}}\,du\right)^2 dv\,ds,
\end{align*}
where we replaced $j(\cdot)$ with the constant function identically equal to 1.
Using~\eqref{eq:I1+I2}, we can bound the first two integrals by
\begin{align*}
J_1+J_2&=\int_0^t(t-s)^{2\alpha-2}\left(I_1+I_2\right)\,ds
=\int_0^t(t-s)^{2H+2\alpha-2}\,ds<\infty.
\end{align*}
Consider $ J_3$.
\begin{align*}
J_3&=c_H^2\int_0^t\int_s^tv^{1-2H}\left(\int_v^t(u-s)^{\alpha-2}\int_v^uz^{H-\frac12}(z-v)^{H-\frac32}\,dz\,du\right)^2 dv\,ds\\
&\le c_H^2t^{2H-1}\int_0^t\int_s^tv^{1-2H}\left(\int_v^t(u-s)^{\alpha-2}\int_v^u(z-v)^{H-\frac32}\,dz\,du\right)^2 dv\,ds\\
&=C t^{2H-1}
\int_0^t\int_s^tv^{1-2H}\left(\int_v^t(u-s)^{\alpha-2}(u-v)^{H-\frac12}\,du\right)^2 dv\,ds\\
&\le C t^{2H-1}
\int_0^t\int_s^tv^{1-2H}\left(\int_v^t(u-s)^{H+\alpha-5/2}\,du\right)^2 dv\,ds.
\end{align*}
The convergence of the integral for some $\alpha=\alpha_0\in(0,1)$ implies its convergence for all $\alpha\in[\alpha_0,1)$.
Therefore, we can assume without loss of generality that
$\alpha<\frac32-H$.
Then
\begin{align*}
J_3&\le C t^{2H-1}
\int_0^t\int_s^tv^{1-2H}\left((t-s)^{H+\alpha-\frac32}-(v-s)^{H+\alpha-\frac32}\right)^2 dv\,ds\\
&\le C t^{2H-1}
\int_0^t\int_s^tv^{1-2H}(v-s)^{2H+2\alpha-3}\,dv\,ds.
\end{align*}
By changing the order of integration, we get
\[
J_3\le C t^{2H-1}
\int_0^tv^{1-2H}\int_0^v(v-s)^{2H+2\alpha-3}\,ds\,dv=C t^{2H-1}
\int_0^tv^{2\alpha-1}\,dv<\infty.
\]
Consider $ J_4$.
\begin{align*}
J_4&=c_H^2\int_0^t\int_0^sv^{1-2H}\left(\int_s^t(u-s)^{\alpha-2}
\int_u^sz^{H-\frac12}(z-v)^{H-\frac32}\,dz\,du\right)^2 dv\,ds\\
&\le c_H^2t^{2H-1}\int_0^t\int_0^sv^{1-2H}\left(\int_s^t(u-s)^{\alpha-2}
\int_u^s(z-v)^{H-\frac32}\,dz\,du\right)^2 dv\,ds\\
&= c_H^2t^{2H-1}\int_0^t\int_0^sv^{1-2H}\left(\int_s^t\int_z^t(u-s)^{\alpha-2}
(z-v)^{H-\frac32}\,du\,dz\right)^2 dv\,ds\\
&\le C t^{2H-1}\int_0^t\int_0^sv^{1-2H}\left(\int_s^t(z-s)^{\alpha-1}
(z-v)^{H-\frac32}\,dz\right)^2 dv\,ds\\
&= C t^{2H-1}\int_0^t\int_0^sv^{1-2H}\int_s^t(z-s)^{\alpha-1}
(z-v)^{H-\frac32}\,dz\\
&\qquad\times\int_s^t(y-s)^{\alpha-1}
(y-v)^{H-\frac32}\,dy\,dv\,ds\\
&= C t^{2H-1}\int_0^t\int_s^t\int_s^t(z-s)^{\alpha-1}(y-s)^{\alpha-1}\\
&\qquad\times\int_0^sv^{1-2H}(z-v)^{H-\frac32}
(y-v)^{H-\frac32}\,dv\,dz\,dy\,ds\\
&= C t^{2H-1}\int_0^t\int_s^t\int_s^y(z-s)^{\alpha-1}(y-s)^{\alpha-1}\\
&\qquad\times\int_0^sv^{1-2H}(z-v)^{H-\frac32}
(y-v)^{H-\frac32}\,dv\,dz\,dy\,ds\\
&\le C t^{2H-1}\int_0^t\int_s^t\int_s^y(z-s)^{\alpha-1}(y-s)^{\alpha-1}\\
&\qquad\times\int_0^zv^{1-2H}(z-v)^{H-\frac32}
(y-v)^{H-\frac32}\,dv\,dz\,dy\,ds.
\end{align*}
By~\eqref{eq:frac_integral}, we have
\begin{align*}
J_4&\le C t^{2H-1}\int_0^t\int_s^t\int_s^y(z-s)^{\alpha-1}(y-s)^{\alpha-1}z^{\frac12-H}y^{\frac12-H}(y-z)^{2H-2}\,dz\,dy\,ds\\
&\le C t^{2H-1}
\int_0^ts^{1-2H}\int_s^t(y-s)^{\alpha-1}\int_s^y(z-s)^{\alpha-1}(y-z)^{2H-2}\,dz\,dy\,ds\\
&=C t^{2H-1}
\int_0^ts^{1-2H}\int_s^t(y-s)^{2H+2\alpha-3}\,dy\,ds\\
&=C t^{2H-1}
\int_0^ts^{1-2H}(t-s)^{2H+2\alpha-2}\,ds<\infty.
\end{align*}
This concludes the proof.
\end{proof}

\begin{example}
Set i$c_H=\left(\frac{H(2H-1)}{B(2-2H,H-\frac12)}\right)^{\frac12}$,
and $j(u)\equiv1$ n Example~\ref{ex:1}.
Then $g(1,t,s)$ is the Molchan-Golosov kernel.
If $M$ is a Wiener process then the process
$Y_t=\int_0^tg(1,t,s)\,ds$ is the fractional Brownian motion, see~\cite{NVV}.
Note, that in this case trajectories of $Y$ are a.\,s.\ $\gamma$-H\"older  for any  $0<\gamma<H$.
The pathwise generalized Lebesgue-Stiltjes integrals with respect to fractional Brownian motion was studied in~\cite{NR}.
If $M$ is a L\'evy process without Gaussian component, then $Y$ is fLpMG, introduced in~\cite{Tikanmaki}.
\end{example}

\begin{example}
It is very easy to create examples of processes from $  \E\D^-_1(\alpha,T) $ and $ \D^-_\infty(\alpha,T)$. Indeed we can take the same kernel $g(1,t,s)$ and consider any $W^L$ satisfying condition $(A)$ with $a=0$ to get that $ Y\in \E\D^-_0(\alpha,T) $. Moreover, with the same kernel and $M=W$ we get $ Y\in\D^-_\infty(\alpha,T)$, as it immediately follows from \cite{NR}.
\end{example}

\section*{Acknowledgements}

The authors thank the EU project Ukrainian Mathematicians for Life Sciences for providing the framework for this research. Giulia Di Nunno acknowledges financial support from the Norwegian Research Council Project 239019 FINEWSTOCH.


\end{document}